\newcommand{\p}{M}
\newcommand{\B}{\mathcal{B}}
\let\oldeq\equation{}\def\equation{\par\vspace{-\parskip}\oldeq}
\newcounter{saveeqn}%
\newenvironment{example}[1][Example 1.]{\begin{trivlist}
\item[\hskip \labelsep {\bfseries #1}]}{\end{trivlist}}
\newenvironment{example2}[1][Example 2.]{\begin{trivlist}
\item[\hskip \labelsep {\bfseries #1}]}{\end{trivlist}}
\newenvironment{example3}[1][Example 3.]{\begin{trivlist}
\item[\hskip \labelsep {\bfseries #1}]}{\end{trivlist}}
\author{
 Vandana Sharma\footnotemark[1] \and Jeff Morgan\footnotemark[2]}
\begin{document}
\title{Uniform bounds for solutions to Volume-Surface reaction diffusion systems}
\maketitle

\renewcommand{\thefootnote}{\fnsymbol{footnote}}

 \footnotetext[1]{Department of Mathematical and Statistical Sciences, Arizona State University, Tempe, USA, AZ 85281. Email: \textup{\nocorr \texttt{vandanas@asu.edu}}.}
\footnotetext[2] {Department of Mathematics, The University of Houston, Houston, USA,  TX 77004. Email: \textup{\nocorr \texttt{jjmorgan@central.uh.edu}}. 
 }

\begin{abstract}
We consider a reaction-diffusion system where some components
react and diffuse on the boundary of a region, while other components diffuse in
the interior and react with those on the boundary through mass transport. We establish criteria guaranteeing that solutions are uniformly bounded in time.
\end{abstract}

\begin{keywords}
 reaction-diffusion equations, mass transport, conservation of mass,  Laplace Beltrami operator, uniform estimates, a priori estimates.
\end{keywords}

\begin{AMS}
35K57, 35B45
\end{AMS}

\section{Introduction}
Reaction-diffusion systems have been an intensive area of research in pure and applied mathematics, chemisty, biology, and physics. One of the challenging problems is to obtain uniform bounds for the solutions.  

One of the approach used  by researchers in the past  is to bootstrap a priori $L_1$ estimates to $L_{\infty}$ estimates. Considering the smoothing properties of parabolic systems, it seems reasonable to expect $L_{\infty}$ estimates from $L_1$ estimates. But Pierre and Schmidt \cite{RefWorks:124} have shown that solutions can posess $L_1$ estimates and can still blow up in finite time in the $L_{\infty}$ norm.  

This work is concerned with uniform bounds for solutions to reaction-diffusion systems of the form 
\begin{align}\label{sy5}
 u_t\nonumber&=  D\Delta u+H(u)
 & x\in \Omega, \quad&0<t<T
\\\nonumber v_t&=\tilde D\Delta_{M} v+F(u,v)& x\in M,\quad& 0<t<T\\ D\frac{\partial u}{\partial \eta}&=G(u,v) & x\in M, \quad&0<t<T\\\nonumber
u&=u_0  &x\in\Omega ,\quad& t=0\\\nonumber v&=v_0 & x\in M ,\quad &t=0\end{align}
where 
$\Omega$ is a bounded domain in $\mathbb{R}^n$ with smooth boundary M ($\partial \Omega$) belonging to the class $C^{2+\mu}$ with $\mu>0$ such that $\Omega$ lies locally on one side of its  boundary. $\eta$ is the unit outward normal to $M$ (from $\Omega$), and $\Delta$ and $\Delta_{\p}$ are the Laplace and the Laplace Beltrami operators, respectively. In addition, $m, k, n,i$ and $ j$ are positive integers, $D$ and $\tilde D$ are $k\times k$ and $m\times m$ diagonal matrices with positive diagonal entries $\lbrace d_j\rbrace_{1\leq j\leq k}$ and $\lbrace\tilde d_i\rbrace_{1\leq i\leq m}$, respectively. $F=(F_i):\mathbb{R}^k\times\mathbb{R}^m\rightarrow \mathbb{R}^m, G=(G_j):\mathbb{R}^k\times\mathbb{R}^m\rightarrow \mathbb{R}^k$ and $H=(H_j):\mathbb{R}^k \rightarrow \mathbb{R}^k$, and
$ u_0=( {u_0}_j)\in W_p^{2}(\Omega)$ and $v_0= ({v_0}_i)\in W_p^{2}(M)$ with $p>n$. Also, $u_0 $  and $v_0$ satisfy the compatibility condition\[ D{\frac{ \partial {u_0}}{\partial \eta}} =G(u_0,v_0)\quad \text{on $M$}\] Systems of this type have been studied in \cite{RefWorks:005}, \cite{RefWorks:003} and \cite{RefWorks:99}.

Additional conditions are placed on $F$, $G$ and $H$ in section 2, and we see in section 4 that these conditions occur naturally in applications. These hypotheses were recently used to obtain global existence in \cite{RefWorks:2015}.

In general, system ($\ref{sy5}$) is somewhat reminiscent of two component systems where both of the unknowns react and diffuse inside $\Omega$, with various homogeneous boundary conditions and nonnegative initial data. In that setting, global well-posedness and uniform boundedness has been studied by many researchers (cf \cite{RefWorks:11}, \cite{RefWorks:123}), and we refer the interested reader to the excellent survey of Pierre \cite{ RefWorks:86}.

Our work is organized as follows. Section 2 contains hypotheses and statements of the main results, section 3 contains foundational definitions and proofs, and section 4 contains some examples.
\section{Statements of Main Results}
\quad\\

\begin{definition}\label{blah}
A function $(u,v)$ is said to be a $\it solution$ of $\left (\ref{sy5}\right)$ if and only if \[u \in C(\overline \Omega\times[0,T),\mathbb{R}^k)\cap C^{1,0}(\overline \Omega\times(0,T),\mathbb{R}^k)\cap C^{2,1}( \Omega\times(0,T),\mathbb{R}^k)\] and \[v \in C(M\times[0,T),\mathbb{R}^m)\cap C^{2,1}( M\times(0,T),\mathbb{R}^m) \] such that $(u,v)$ satisfies $\left (\ref{sy5}\right)$. If $T=\infty$, the solution is said to be a {\it global solution.}
\end{definition}
Moreover, a solution $(u,v)$ defined for $0\leq t<b$ is a $\it  maximal\  solution$ of $\left (\ref{sy5}\right)$ if and only if $(u,v)$ solves $\left (\ref{sy5}\right)$ with $T=b$, and if $d>b$ and $(\tilde u,\tilde v)$ solves $\left (\ref{sy5}\right)$ for $T=d$, then there exists $0<c<b$ such that $(u(\cdot,c),v(\cdot,c))\ne(\tilde u(\cdot,c), \tilde v(\cdot,c))$.\\

The positive orthant of $\mathbb{R}^n$ is defined by $\lbrace z\in \mathbb{R}^n : z_i\geq 0\rbrace$, and denoted by ${\mathbb{R}}^{n}_{+}$. We say $F$, $G$ and $H$ are $\it quasi positive$ if and only if $F_i(\zeta,\xi)\geq 0$ whenever $\xi\in\mathbb{R}_+^m$ and $\zeta\in \mathbb{R}_+^k$ with $\xi_i=0$ for $i=1,...,m$, and $G_j( \zeta,\xi)$, $H_j( \zeta)\geq 0$ whenever $\xi\in\mathbb{R}_+^m$ and $ \zeta\in\mathbb{R}_+^k$ with $\zeta_j=0$, for $j=1,...,k.$\\

The purpose of this study is to give sufficient conditions guaranteeing that $\left (\ref{sy5}\right)$ has a uniformly bounded global solution. We gave conditions in \cite{RefWorks:2015} that guarantee global solutions to $\left (\ref{sy5}\right)$, and we use these and additional hypotheses to obtain the results in this work. \\

For $l\in \lbrace 1, 2, 3\rbrace$, we say condition $V_{i,j}l$ holds for $1\leq j\leq k$ and $1\leq i\leq m$ if and only if \\

\begin{itemize}
\item[($V_{i,j}1$)] There exist $\sigma>0,\beta>0$, and $\alpha>0$ such that \[\sigma F_i(\zeta,\nu)+ G_j(\zeta,\nu)\leq \alpha(\zeta_j+\nu_i+1)\quad\text{and}\quad H_j(\zeta)\leq \beta(\zeta_j+1)\quad\text{ for all} \quad\nu \in\mathbb{R}^m_{+},\  \zeta \in\mathbb{R}^k_{+}\]
\item[($V_{i,j}2$)]There exists $K_g>0$ such that \[\quad G_j(\zeta,\nu)\leq K_g(\zeta_j+\nu_i+1)\quad\text{for all}\quad\nu \in\mathbb{R}^m_{+},\  \zeta \in\mathbb{R}^k_{+}\]
\item[($V_{i,j}3$)]There exists $l \in \mathbb{N}$ and $K_f>0$ such that \[\quad  F_i(\zeta,\nu)\leq K_f( |\zeta|+|\nu|+1)^l\quad\text{for all} \quad\nu \in\mathbb{R}^m_{+},\  \zeta \in\mathbb{R}^k_{+}\]
\end{itemize}
\vspace{.3cm}
In addition, we say $V_{i,j}$ holds if $(V_{i,j}1), (V_{i,j}2)$ and $(V_{i,j}3)$ hold. We showed in \cite{RefWorks:2015} that $(V_{i,j}1)$ provides $L_1$ estimates for $u_j$ on $\Omega$ and $M \times(\tau,T)$, and $v_i$ on $M$. $(V_{i,j}2)$ helps us bootstrap to better $L_p$ estimates on $M\times(\tau,T)$ and $\Omega\times(\tau,T)$. Finally, both $(V_{i,j}2)$ and $(V_{i,j}3)$ allow us to obtain sup norm estimates on $u_j$ and $v_i$, once $L_p$ estimates have been obtained for all components of $u$ and $v$. \\

The following result is given in  \cite{RefWorks:2015}.\\

\begin{theorem}\label{great}
Suppose $F$, $G$ and $H$ are locally Lipschitz,  quasi positive, and $u_0, v_0$ are componentwise nonnegative functions. Also, assume that for each $1\leq j\leq k$ and $1\leq i\leq m$, there exists $l_i\in\lbrace 1,...,k\rbrace$ and $k_j\in\lbrace1,...,m\rbrace$ so that both $V_{i,l_i}$ and $V_{k_j,j}$ are satisfied. Then  $(\ref{sy5})$ has a unique component-wise nonegative global solution.
\end{theorem}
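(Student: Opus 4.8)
The plan is to produce a local-in-time solution with a blow-up criterion, confirm it stays componentwise nonnegative, and then rule out blow-up by cascading the a priori estimates encoded in $V_{i,j}1$--$V_{i,j}3$. For local existence and uniqueness I would recast $(\ref{sy5})$ as an abstract semilinear problem on the product space $L_p(\Omega)^k\times L_p(M)^m$. The linear part---$D\Delta$ in $\Omega$ with conormal data on $M$, together with $\tilde D\Delta_M$ on the closed manifold $M$---generates an analytic semigroup, and the nonlinear flux $G(u,v)$ enters as inhomogeneous boundary data lifted into the variation-of-constants formula; since $F,G,H$ are locally Lipschitz and the data lie in the correct trace class ($u_0\in W_p^2(\Omega)$, $v_0\in W_p^2(M)$, $p>n$, together with the stated compatibility condition), a contraction-mapping argument gives a unique maximal solution $(u,v)$ on $[0,T_{\max})$ satisfying the continuation alternative: if $T_{\max}<\infty$ then $\limsup_{t\to T_{\max}^{-}}\big(\|u(\cdot,t)\|_{\infty}+\|v(\cdot,t)\|_{\infty}\big)=\infty$. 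Componentwise nonnegativity then follows from quasi-positivity via a tangency/maximum-principle argument: where a component $u_j$ vanishes, $H_j\ge0$ and the boundary flux $G_j\ge0$ push it upward, and likewise $F_i\ge0$ where $v_i$ vanishes, so the positive orthant is invariant under nonnegative data.

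The heart of the matter is to show $T_{\max}=\infty$, for which I would first extract $L_1$ bounds. Integrating the $u_j$-equation over $\Omega$ turns $d_j\int_\Omega\Delta u_j$ into the flux $\int_M G_j(u,v)$, while integrating the $v_i$-equation over $M$, which has no boundary, annihilates the Laplace--Beltrami term; hence $\frac{d}{dt}\big(\int_\Omega u_j+\sigma\int_M v_i\big)=\int_M(\sigma F_i+G_j)+\int_\Omega H_j$, which $V_{i,j}1$ dominates by $\alpha\int_M(u_j+v_i+1)+\beta\int_\Omega(u_j+1)$. The index-matching hypothesis---for every $j$ some $V_{k_j,j}$ holds and for every $i$ some $V_{i,l_i}$ holds---ensures each component enters one such controllable combination. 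A Gronwall argument then yields uniform $L_1$ bounds on $u_j$ over $\Omega$ and on $v_i$ over $M$; the space-time boundary bound on $u_j$ over $M\times(\tau,T)$ I would obtain by testing the $u_j$-equation against the solution $w$ of the auxiliary problem $-d_j\Delta w=1$ in $\Omega$, $w=0$ on $M$, whose conormal derivative on $M$ yields a good-signed boundary term controlling $\int_\tau^T\!\int_M u_j$.

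With $L_1$ control in place I would bootstrap. Condition $V_{i,j}2$ dominates the boundary flux $G_j$ by first powers of $u_j,v_i$, so a duality argument in the spirit of Pierre---pairing the linear parabolic problems with solutions of suitable dual backward problems---upgrades the $L_1$ bounds to $L_p$ bounds on $\Omega\times(\tau,T)$ and $M\times(\tau,T)$ for arbitrarily large $p$. Once every component of $u$ and $v$ is controlled in a high $L_p$ norm, $V_{i,j}3$ bounds $F_i$ by a fixed power of $(|u|+|v|+1)$, and feeding these $L_p$ bounds through the $L_p$--$L_\infty$ smoothing estimates for the volume and surface semigroups (and through the trace map for the flux term) gives uniform sup-norm bounds on $[0,T_{\max})$. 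These contradict the continuation alternative unless $T_{\max}=\infty$, delivering the global solution.

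The step I expect to be the main obstacle is the bootstrap across the coupled volume--surface geometry. Because the only coupling is through the boundary condition rather than a bulk reaction, every estimate must be transferred between interior norms on $\Omega\times(\tau,T)$ and boundary norms on $M\times(\tau,T)$ through trace and flux operators whose mapping properties deteriorate as $p$ grows; making the duality method respect this volume-to-surface transfer, and chaining the componentwise estimates through the index-matching so that no component is left uncontrolled, is where the delicate work lies. By comparison, the local theory and the nonnegativity argument, though essential, are comparatively routine.
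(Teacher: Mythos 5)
Your overall architecture coincides with the proof this paper is relying on: Theorem \ref{great} is not proved here but quoted from \cite{RefWorks:2015}, and the program there is exactly what you describe --- local existence by contraction with a continuation criterion, invariance of the nonnegative orthant via quasipositivity, $L_1$ bounds from $(V_{i,j}1)$, a Pierre-type duality bootstrap from $(V_{i,j}2)$ (this is precisely the mechanism of the dual system $(\ref{aj2})$--$(\ref{ajj3})$ and Lemma \ref{global_p} in the present paper), and finally sup-norm bounds from $(V_{i,j}2)$, $(V_{i,j}3)$ and $L_q$-to-$L_\infty$ regularity as in Lemma \ref{bw}. However, one step of your plan fails as written: the auxiliary \emph{elliptic} function for the space-time boundary $L_1$ estimate. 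Let $w$ solve $-d_j\Delta w=1$ in $\Omega$, $w=0$ on $M$, so that $w\ge 0$ and, by Hopf's lemma and compactness, $\partial w/\partial\eta\le -c<0$ on $M$. Testing the $u_j$-equation against $w$ (the flux term indeed vanishes since $w=0$ on $M$) gives
\[
\frac{d}{dt}\int_\Omega u_j w=-\int_\Omega u_j-d_j\int_M u_j\,\frac{\partial w}{\partial\eta}+\int_\Omega H_j(u)\,w,
\]
so the boundary term you want to harvest obeys
\[
c\,d_j\int_M u_j\le \frac{d}{dt}\int_\Omega u_j w+\int_\Omega u_j-\int_\Omega H_j(u)\,w .
\]
The last term requires a \emph{lower} bound on $H_j$, whereas $(V_{i,j}1)$ supplies only the upper bound $H_j(\zeta)\le\beta(\zeta_j+1)$. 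Take $H_j(u)=-u_j^3$: this is quasipositive and satisfies every hypothesis of Theorem \ref{great}, yet it turns the offending term into $+\int_\Omega u_j^3\,w$, which no amount of $L_1$ information controls. Since your Gronwall step also leaves the trace term $\alpha\int_M u_j$ on the right-hand side of $\frac{d}{dt}\bigl(\int_\Omega u_j+\sigma\int_M v_i\bigr)\le\alpha\int_M(u_j+v_i+1)+\beta\int_\Omega(u_j+1)$ uncontrolled when $\alpha>0$, the whole $L_1$ theory hinges on that boundary estimate, so the gap is genuine and not cosmetic.

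The repair is the paper's own device, visible in the proof of Corollary \ref{adventure23}: use a \emph{backward parabolic} dual, $\varphi_t=-d_j\Delta\varphi$ in $\Omega\times(\tau,T)$ with inhomogeneous flux $d_j\,\partial\varphi/\partial\eta=1$ on $M$ and prescribed terminal data, and test \emph{both} the bulk $u_j$-equation and the surface $v_i$-equation against $\varphi$ (and its trace). In the resulting identity every sign is favorable: $H_j$ enters as $+\iint_\Omega\varphi\,H_j(u)$ with $\varphi\ge 0$, so the upper bound $H_j\le\beta(u_j+1)$ suffices; $F_i$ and $G_j$ appear only in the combination $\sigma F_i+G_j$ that $(V_{i,j}1)$ dominates; and the desired quantity $\iint_M u_j$ is generated directly by the flux datum $d_j\,\partial\varphi/\partial\eta=1$ on the controlled side. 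Run in tandem with your differential inequality (for Theorem \ref{great} only finite-horizon bounds are needed, so constants may depend on $T$), this closes the $L_1$ step, after which your duality bootstrap and sup-norm arguments proceed exactly as in \cite{RefWorks:2015}.
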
\\

Our main results are given below.\\

\begin{theorem}\label{globalu}
Suppose $F$, $G$ and $H$ are locally Lipschitz,  quasi positive, and $u_0, v_0$ are componentwise nonnegative functions. Also, assume that for each $1\leq j\leq k$ and $1\leq i\leq m$, there exists $l_i\in\lbrace 1,...,k\rbrace$ and $k_j\in\lbrace1,...,m\rbrace$ so that both $V_{i,l_i}$ and $V_{k_j,j}$ are satisfied. If there exists $L>0$ independent of $\tau\geq0$ such that \[\int_{\tau}^{\tau+1}\int_{\Omega} u_j+\int_{\tau}^{\tau+1}\int_{M} v_i+\int_{\tau}^{\tau+1}\int_{M} u_j \leq L\]  for all $\tau\geq 0$, then the solution of $(\ref{sy5})$ is uniformly bounded in the sup-norm.
\end{theorem}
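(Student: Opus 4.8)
The plan is to show that the hypotheses already guarantee uniform-in-time $L_p$ bounds on $u$ and $v$ for every finite $p$, and then to promote these to an $L_\infty$ bound by one further application of the smoothing estimates. Since Theorem~\ref{great} provides a global solution, it suffices to bound $\sup_{x}(|u(x,t)|+|v(x,t)|)$ independently of $t$; for $t\in[0,1]$ this follows from the local theory and the regularity of $u_0,v_0$, so the whole difficulty is to produce bounds on $t\ge1$ whose constants do not depend on how far out in time we look. The device that enforces this $\tau$-independence is to work on time windows of length at least two, $[\tau,\tau+2]$, and to represent the solution by Duhamel's formula started from a well-chosen time $s^\ast\in[\tau,\tau+1]$. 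The time-averaged $L_1$ hypothesis guarantees that $s^\ast$ may be chosen with $\int_\Omega u_j(s^\ast)$, $\int_M u_j(s^\ast)$ and $\int_M v_i(s^\ast)$ all bounded by $L$; after a buffer of length one the Neumann heat semigroup on $\Omega$ and the Laplace--Beltrami semigroup on $M$ map this left-endpoint data from $L_1$ into $L_\infty$ with a bounded operator norm, so the contribution of the (otherwise uncontrolled) data at $s^\ast$ is absorbed once and for all.

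The structure of the nonlinearities dictates how the bootstrap proceeds. By $(V_{i,j}1)$ the interior source satisfies $H_j(\zeta)\le\beta(\zeta_j+1)$ and by $(V_{i,j}2)$ the boundary flux satisfies $G_j(\zeta,\nu)\le K_g(\zeta_j+\nu_i+1)$, so the equation for each $u_j$ has a volume source that is linear in $u_j$ and a Neumann flux that is linear in the traces of $u_j$ and in $v_i$; the only genuinely superlinear term is the surface reaction $F_i$, which by $(V_{i,j}3)$ is bounded by $K_f(|\zeta|+|\nu|+1)^l$. Consequently $v$ is the delicate component, and because the flux data for $u$ involves $v_i$ while $F_i$ involves the trace of $u$, the exponents for $u$ on $\Omega$, for the trace of $u$ on $M$, and for $v$ on $M$ must be improved simultaneously. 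The inductive step I would prove is of the form: if the three quantities $\int_\tau^{\tau+1}\!\int_\Omega u_j^{\,p}$, $\int_\tau^{\tau+1}\!\int_M u_j^{\,p}$ and $\int_\tau^{\tau+1}\!\int_M v_i^{\,p}$ are bounded uniformly in $\tau$, then the analogous quantities with a strictly larger exponent $q$ are bounded uniformly in $\tau$. Here I would insert the Duhamel representation, apply the $L_r\!\to\!L_q$ smoothing bounds $\|e^{s\Delta}\|_{L_r(\Omega)\to L_q(\Omega)}\lesssim s^{-\frac n2(\frac1r-\frac1q)}$ and $\|e^{s\Delta_M}\|_{L_r(M)\to L_q(M)}\lesssim s^{-\frac{n-1}2(\frac1r-\frac1q)}$ together with the corresponding boundary-trace smoothing for the inhomogeneous Neumann problem, and use $(V_{i,j}2)$ and $(V_{i,j}3)$ to replace $G_j$ and $F_i$ by the current norms. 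The singular kernels are integrable in $s$ as long as the dimensional exponents stay below one, which is what forces the gain in $q$ to be taken in finitely many controlled increments, and $\tau$-uniformity is inherited because every source term is estimated through its time integral over the window, exactly the quantity controlled at the previous stage. This part of the argument is a direct extension of the $L_1$-to-$L_p$ bootstrap already used in \cite{RefWorks:2015}, now carried out with $\tau$-independent constants.

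After finitely many iterations I obtain, uniformly in $\tau$, time-averaged $L_p$ control for every finite $p$, and converting the time-integrated estimates to pointwise-in-time estimates on $[\tau+2,\tau+3]$ gives $\sup_{t\ge1}\big(\|u(t)\|_{L_p(\Omega)}+\|u(t)\|_{L_p(M)}+\|v(t)\|_{L_p(M)}\big)<\infty$. Choosing $p$ large relative to $n$ and to the degree $l$ in $(V_{i,j}3)$---so that $F_i$ lies in a space mapped into $L_\infty$ by the Laplace--Beltrami semigroup and $G_j$ lies in a boundary space mapped into $L_\infty(\Omega)$ by the Neumann semigroup---a final application of the smoothing and trace estimates yields the uniform sup-norm bound. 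The main obstacle, and the place where the argument must be handled with care, is precisely the enforcement of $\tau$-independence together with the volume--surface coupling: one never controls the solution at the left endpoint of a window, so each step must simultaneously forget that data via the buffer, pass interior $L_p$ information to the boundary through trace theorems, and feed the boundary value $v_i$ back into the flux for $u$, all while ensuring the gain in integrability at each step outpaces the polynomial amplification coming from the exponent $l$. Once the exponents are chosen so that this race is won, the induction terminates and the final smoothing step closes the estimate.
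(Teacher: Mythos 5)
There is a genuine gap, and it sits exactly where you flag a ``race'': your bootstrap controls the surface reaction $F_i$ only through the polynomial bound $(V_{i,j}3)$, and that race is in general lost, not merely delicate. If at some stage you control $u$ and $v$ in $L_q$, then $(V_{i,j}3)$ only gives $F_i\in L_{q/l}$, while the $L_r\to L_q$ semigroup smoothing gain per Duhamel step is capped by the dimensional constraint you yourself note (the kernel exponent must stay below one), i.e.\ the exponent can improve by at most a fixed dimension-dependent factor per step. Since $l\in\mathbb{N}$ is arbitrary under the hypotheses --- nothing in the theorem bounds it --- for large $l$ each iteration strictly loses integrability and the induction never closes. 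This is not a technicality one can engineer around by choosing increments cleverly: an $L_1$ a priori bound together with polynomial growth alone does not imply $L_\infty$ bounds (this is precisely the Pierre--Schmitt caveat cited in the paper's introduction). The hypothesis that rescues the theorem is the one your iteration never invokes: the \emph{combined} linear control $\sigma F_i(\zeta,\nu)+G_j(\zeta,\nu)\leq\alpha(\zeta_j+\nu_i+1)$ from $(V_{i,j}1)$. No estimate on $F_i$ alone, at low integrability, can substitute for it.

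The paper's Lemma \ref{global_p} is built to exploit exactly that cancellation, via duality rather than Duhamel: one solves backward dual problems for $\Psi$ on $M$ and $\varphi$ on $\Omega$ with the coupling $w=z$ on $M$, so that after integration by parts the pairing $\int_\tau^{\tau+3}\int_\Omega u_j\vartheta+\int_\tau^{\tau+3}\int_M v_i\tilde\vartheta$ produces the term $\int_\tau^{\tau+3}\int_M (F_i+G_j)\,w$, which $(V_{i,j}1)$ bounds \emph{linearly} in $u_j+v_i+1$; the maximal-regularity and embedding estimates (Lemmas \ref{manifold}, \ref{con}, \ref{flat}) then give the gain $p=\frac{n+3}{n+2}\,q$ per step with constants independent of $\tau$ and, crucially, independent of $l$. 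The trace bound for $u_j$ on $M$ --- which you propose to obtain from trace theorems inside the semigroup iteration --- is instead recovered by an energy argument, multiplying by $U_j^{p-1}$ and absorbing the boundary integral via the interpolated trace inequality of Lemma \ref{i}, using only the linear bounds from $(V_{i,j}1)$ and $(V_{i,j}2)$. The polynomial bound $(V_{i,j}3)$ enters only once, harmlessly, after $(\ref{p*})$ holds for every component and every finite $p$: then $K_f(|u|+|v|+1)^l$ lies in $L_q(M\times(\tau,\tau+3))$ for $q>n+1$, and the comparison system $(\ref{k})$ together with Lemma \ref{bw} yields the uniform sup-norm bound. Your window-and-cutoff device for $\tau$-uniformity and your good-slice selection via the averaged $L_1$ hypothesis match the paper's strategy in spirit, but the analytic core of your bootstrap must be replaced by the duality step; as written, it fails for large $l$.
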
\\

\begin{corollary}\label{adventure23}
Assume the hypothesis of  Theorem $\ref{great}$ hold with $\alpha=0$ and $\beta=0$. Then there exists $K>0$ independent of $t$ such that \[ \Vert u(\cdot,t)\Vert_{\infty,\Omega}+\Vert v(\cdot,t)\Vert_{\infty,M}\leq K \quad\text {for all } \ t\geq 0\]

\end{corollary}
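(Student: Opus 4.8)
The corollary will follow as soon as we verify the one nontrivial hypothesis of Theorem \ref{globalu}: a bound on the space--time $L_1$ integrals that is \emph{uniform} in $\tau$. The plan is therefore to produce a constant $L$, independent of $\tau$, with
\[
\int_\tau^{\tau+1}\!\!\int_\Omega u_j+\int_\tau^{\tau+1}\!\!\int_M v_i+\int_\tau^{\tau+1}\!\!\int_M u_j\le L,
\]
and then to invoke Theorem \ref{globalu} verbatim. Setting $\alpha=\beta=0$ in $(V_{i,j}1)$ turns that hypothesis into the pointwise dissipative inequalities $\sigma F_i(\zeta,\nu)+G_j(\zeta,\nu)\le 0$ and $H_j(\zeta)\le 0$ on $\mathbb{R}_+^k\times\mathbb{R}_+^m$, and this extra sign information is exactly what upgrades the (a priori $\tau$-dependent) $L_1$ estimates of \cite{RefWorks:2015} to $\tau$-independent ones.

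First I would establish pointwise-in-time $L_1$ bounds. Integrating the $u_j$ equation over $\Omega$ and using the flux condition $d_j\,\partial u_j/\partial\eta=G_j$, and integrating the $v_i$ equation over the closed manifold $M$, gives
\[
\frac{d}{dt}\int_\Omega u_j=\int_M G_j(u,v)+\int_\Omega H_j(u),\qquad \frac{d}{dt}\int_M v_i=\int_M F_i(u,v).
\]
For each $j$ choose the index $k_j$ with $V_{k_j,j}$; the combination $\int_\Omega u_j+\sigma\int_M v_{k_j}$ is then nonincreasing, since $H_j\le0$ and $G_j+\sigma F_{k_j}\le0$. As the solution is componentwise nonnegative (Theorem \ref{great}), this yields $\sup_{t\ge0}\int_\Omega u_j\le C$, and, using the index $l_i$ with $V_{i,l_i}$, also $\sup_{t\ge0}\int_M v_i\le C$. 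The first two space--time integrals follow at once by integrating over a unit time interval.

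The main obstacle is the boundary-trace term $\int_\tau^{\tau+1}\!\int_M u_j$, which the interior $L_1$ control does not obviously dominate. To capture it I would fix the auxiliary function $\phi$ solving $-\Delta\phi+\phi=0$ in $\Omega$ with $\partial\phi/\partial\eta=1$ on $M$; by the maximum principle and the Hopf lemma $\phi$ is smooth and bounded between positive constants. Testing the $u_j$ equation against $\phi$, applying Green's identity, and using $\Delta\phi=\phi$, $\partial\phi/\partial\eta=1$ and the flux condition produces
\[
d_j\int_M u_j=d_j\int_\Omega u_j\phi+\int_M \phi\,G_j(u,v)+\int_\Omega \phi\,H_j(u)-\frac{d}{dt}\int_\Omega u_j\phi.
\]
Integrating over $(\tau,\tau+1)$, the first term is controlled by $\sup_t\int_\Omega u_j$, the $H_j$ term is $\le0$, and the time-endpoint term is bounded above by $\|\phi\|_\infty\sup_t\int_\Omega u_j$ using nonnegativity.

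The delicate term is $\int\!\!\int_M\phi\,G_j$, and here is where $\alpha=\beta=0$ is indispensable. I would exploit $G_j\le-\sigma F_{k_j}$ to replace it by $-\sigma\int\!\!\int_M\phi\,F_{k_j}$, then substitute $F_{k_j}=\partial_t v_{k_j}-\tilde d_{k_j}\Delta_M v_{k_j}$ from the $v_{k_j}$ equation and integrate by parts in time (telescoping) and in space on the closed manifold $M$ against the fixed weight $\phi|_M$. Since $\phi|_M$ and $\Delta_M(\phi|_M)$ are bounded, this term is controlled by $\sup_t\int_M v_{k_j}$, again uniformly in $\tau$. Collecting the bounds gives $\int_\tau^{\tau+1}\!\int_M u_j\le L$ with $L$ independent of $\tau$, which completes the verification so that Theorem \ref{globalu} applies (all manipulations being justified for solutions in the class of Definition \ref{blah} by the usual approximation near the time endpoints). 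The crux is precisely the conversion of the boundary integral of $G_j$ into one of $F_{k_j}$: a direct appeal to $(V_{i,j}2)$ would make $\int_M u_j$ reappear on the right-hand side with an uncontrolled constant, and the dissipative inequality $G_j+\sigma F_{k_j}\le0$ is what breaks that circularity.
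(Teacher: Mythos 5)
Your proposal is correct, and it shares the paper's two-step skeleton: first, uniform-in-time $L_1$ bounds from Gronwall's inequality applied to $\frac{d}{dt}\left(\int_\Omega u_j+\sigma\int_M v_{k_j}\right)\leq \int_M\left(G_j+\sigma F_{k_j}\right)+\int_\Omega H_j\leq 0$ (identical to the derivation of $(\ref{intau})$, except that you keep $\sigma$ general where the paper sets $\sigma=1$ ``for simplicity''); second, control of the boundary trace integral $\int_\tau^{\tau+1}\int_M u_j$ by testing the $u_j$ equation against an auxiliary function whose conormal derivative is identically $1$ on $M$, so that the trace of $u_j$ appears with a definite coefficient. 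Where you genuinely diverge is in the choice of that auxiliary function. The paper uses a \emph{time-dependent} one: the nonnegative solution $\varphi$ of the backward heat problem $\varphi_t=-d_j\Delta\varphi$ on $\Omega\times(\tau,\tau+1)$ with $d_j\,\partial\varphi/\partial\eta=1$ and smooth compatible terminal data, invoking parabolic Schauder theory (Theorem 5.3, Chapter 4 of \cite{RefWorks:65}) for regularity and handling the $F_i$ contribution through the pairing $\int_\tau^{\tau+1}\int_M v_i\tilde\vartheta$ with $\tilde\vartheta=-\varphi_t-\tilde d_i\Delta_M\varphi$ bounded in sup norm, $\tau$-independence following because the same $\varphi$ works on every interval after time translation. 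Your \emph{stationary} choice $\Delta\phi=\phi$, $\partial\phi/\partial\eta=1$ is automatically $\tau$-independent and trades the backward-parabolic machinery for elliptic Schauder theory plus the Hopf lemma; your subsequent substitution $F_{k_j}=\partial_t v_{k_j}-\tilde d_{k_j}\Delta_M v_{k_j}$, integrated by parts in $t$ and on the closed manifold $M$ against $\phi|_M$, is precisely the transposed analogue of the paper's $\tilde\vartheta$-pairing, and your requirement that $\Delta_M(\phi|_M)$ be bounded mirrors the paper's explicit computation of $\Delta_M\varphi$ via the metric (available since $M\in C^{2+\mu}$ gives $\phi\in C^{2+\mu}(\overline\Omega)$). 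Your sign bookkeeping is also sound: since $u_j\geq 0$ on $M$, one-sided upper bounds on $\int\!\!\int_M\phi\,G_j$ via $G_j\leq-\sigma F_{k_j}$ and on $\int\!\!\int_\Omega\phi\,H_j\leq 0$ suffice, exactly as the paper's inequality $(\ref{bu})$ discards the corresponding terms. The net effect is a mildly more elementary proof (no terminal-data compatibility condition, no parabolic estimates) at the cost of verifying positivity and $C^{2+\mu}$ regularity for the elliptic $\phi$; the paper's parabolic version, on the other hand, reuses verbatim the dual-problem framework of its Lemma \ref{manifold} bootstrapping apparatus.
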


The condition in $(V_{i,j}1)$ above is related to the idea of conservation of mass in \cite{RefWorks:86} in the case when $\alpha=\beta=0$. Although all of the systems in that work reacted and diffused in $\Omega$, it is still interesting to note that the same condition leads to the uniform $L_1$ estimates required above, and consequently uniform sup norm estimates for solutions to  $(\ref{sy5})$.\\


\section{ Definitions, Estimates and Proofs of Main Results}
\setcounter{equation}{0}
\subsection{Definitions of Basic Function Spaces}
Throughout this work, $n\geq 2$. Let $\Omega$  be a bounded domain on $\mathbb{R}^n$ with smooth boundary such that $\Omega$ lies locally on one side of $\partial\Omega$. We define all function spaces on $\Omega$ and $\Omega_T=\Omega\times(0,T)$. 
$L_p(\Omega)$ is the Banach space consisting of all measurable functions on $\Omega$ that are $p^{th}(p\geq 1)$ power summable on $\Omega$. The norm is defined as\[ \Vert u\Vert_{p,\Omega}=\left(\int_{\Omega}| u(x)|^p dx\right)^{\frac{1}{p}}\]
Also, \[\Vert u\Vert_{\infty,\Omega}= ess \sup\lbrace |u(x)|:x\in\Omega\rbrace.\]
Measurability and summability are to be understood everywhere in the sense of Lebesgue.

If $p\geq 1$, then $W^2_p(\Omega)$ is the Sobolev space of functions $u:\Omega\rightarrow \mathbb{R}$ with generalized derivatives, $\partial_x^s u$ (in the sense of distributions) $|s|\leq 2$ belonging to $L_p(\Omega)$.  Here $s=(s_1,s_2,$...,$s_n),|s|=s_1+s_2+..+s_n$, $|s|\leq2$, and $\partial_x^{s}=\partial_1^{s_1}\partial_2^{s_2}$...$\partial_n^{s_n}$ where $\partial_i=\frac{\partial}{\partial x_i}$. The norm in this space is \[\Vert u\Vert_{p,\Omega}^{(2)}=\sum_{|s|=0}^{2}\Vert \partial_x^s u\Vert_{p,\Omega} \]

Similarly, $W^{2,1}_p(\Omega_T)$ is the Sobolev space of functions $u:\Omega_T\rightarrow \mathbb{R}$ with generalized derivatives, $\partial_x^s\partial_t^r u$ (in the sense of distributions) where $2r+|s|\leq 2$ and each derivative belonging to $L_p(\Omega_T)$. The norm in this space is \[\Vert u\Vert_{p,\Omega_T}^{(2)}=\sum_{2r+|s|=0}^{2}\Vert \partial_x^s\partial_t^r u\Vert_{p,\Omega_T} \]

We also introduce $W^l_p(\Omega)$, where $l>0$ is not an integer, because initial data will be taken from these spaces. The space $W^l_p(\Omega)$ with nonintegral $l$, is a Banach space consisting of elements of $W^{[l]}_p$ ([$l$] is the largest integer less than $ l$) with the finite norm\[\Vert u\Vert_{p,\Omega}^{(l)} =\langle u\rangle_{p,\Omega}^{(l)}+\Vert u\Vert_{p,\Omega}^{([l])} \]
where  \[\Vert u\Vert_{p,\Omega}^{([l])}=\sum_{s=0}^{[l]}\Vert \partial_x^s u\Vert_{p,\Omega} \] and 
\[\langle u\rangle_{p,\Omega}^{(l)}=\sum_{s=[l]}\left(\int_\Omega dx\int_\Omega{|\partial_x^s u(x)-\partial_y^s u(y)|}^p.\frac{dy}{|x-y|^{n+p(l-[l])}}\right)^\frac{1}{p}\]
$W^{l,\frac{l}{2}}_p(\partial\Omega_T)$ spaces with non integral $l$ also play an important role in the study of boundary value problems with nonhomogeneous boundary conditions, especially in the proof of exact estimates of their solutions.  It is a Banach space when $p\geq 1$, which is defined by means of parametrization of the surface $\partial\Omega$. For a rigorous treatment of these spaces, we refer the reader to page 81 of Chapter 2 of \cite{RefWorks:65}.

\subsection{Bootstrapping Strategy}

For completeness of  our arguments, we state the results below which will help us obtain $L_q$ estimates for the solution to $(\ref{sy5})$. The following system will play a central role in duality arguments.\begin{align}\label{aj2}
\Psi_t & =-\tilde d\Delta_M \Psi-\tilde\vartheta & (x,t)\in M\times (\tau,T)\nonumber\\ 
\Psi &= 0 & x\in M , t=T\tag{3a}\nonumber
\end{align}
\begin{align}\label{ajj3}
\varphi_t &=- d\Delta \varphi-\vartheta & (x,t)\in \Omega\times (\tau,T)\nonumber\\ \kappa_1d\frac{\partial \varphi}{\partial \eta}&+\kappa_2\varphi=\Psi & (x,t)\in M\times (\tau,T)\tag{3b}\nonumber\\
\varphi&=0 &x\in\Omega ,\quad t=T\nonumber\end{align}
Here, $q>1$, $0<\tau<T$, $\tilde\vartheta\in L_{q}{(M\times(\tau,T))}$ and $\tilde\vartheta\geq 0$, and  $\vartheta\in L_{q}{(\Omega\times(\tau,T))}$ and $\vartheta\geq 0$. Also $d>0$, $\tilde d>0$, and $\kappa_1,\kappa_2 \in\mathbb{R}$ such that $\kappa_1\geq 0$ and $|\kappa_1|+|\kappa_2|> 0$. Lemmas $\ref{manifold}$ to $\ref{i}$ provide helpful estimates. See section 4 and 5 of \cite{RefWorks:2015} for the proof of Lemmas $\ref{manifold}$, $\ref{flat}$, and $\ref{bw}$. Lemmas $\ref{con}$ and $\ref{Hol}$ follow from Lemma 3.3 of chapter 2 in \cite{RefWorks:65}, and Lemma $\ref{i}$ can be found on page 49 in \cite{RefWorks:69}.   \\

\begin{lemma}\label{manifold}
$(\ref{aj2})$ has a unique nonnegative solution $\Psi\in{W_{q}}^{2,1}{(M\times(\tau,T))}$ and there exists $C_{q,T-\tau}>0$ independent of $\tilde\vartheta$ such that \[\Vert \Psi\Vert_{q,M\times(\tau,T)}^{(2)}\leq C_{q,T-\tau}\Vert\tilde\theta\Vert_{q,M\times(\tau,T)}\]
Furthermore, if $\kappa_1=0$ and $\kappa_2>0$, then $(\ref{ajj3})$ has a unique nonnegative solution $\varphi\in W_{q}^{2,1}{(\Omega\times(\tau,T))}$. Moreover, there exists $C_{q,T-\tau}>0$ independent of $\vartheta$ and $\tilde\vartheta$, and dependent on $d,\tilde d,\kappa_1$ and $\kappa_2$ such that
\[{\Vert \varphi\Vert}_{q,\Omega\times(\tau,T)}^{(2)}\leq C_{q,T-\tau}(\Vert\tilde\theta\Vert_{q,M\times(\tau,T)}+\Vert\theta\Vert_{q,\Omega\times(\tau,T)})\]
\end{lemma}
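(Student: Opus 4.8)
The plan is to reduce both problems to standard forward parabolic initial–boundary value problems by reversing time, and then to invoke the classical $L_q$ solvability and maximal regularity theory for linear parabolic equations from \cite{RefWorks:65}, using the parabolic maximum principle separately to obtain the sign. For $(\ref{aj2})$ I would set $s=T-t$, which turns the backward equation into
\[\Psi_s=\tilde d\,\Delta_M\Psi+\tilde\vartheta\quad\text{on } M\times(0,T-\tau),\qquad \Psi(\cdot,0)=0,\]
a forward heat equation on the closed $C^{2+\mu}$ manifold $M=\partial\Omega$ driven by the source $\tilde\vartheta\in L_q$. Since $M$ has no boundary there is no boundary condition to impose, so existence and uniqueness in $W_q^{2,1}(M\times(0,T-\tau))$ follow from the $L_q$ theory applied in local charts: I would fix a finite atlas with a subordinate partition of unity, write $\Delta_M$ in each chart as a uniformly elliptic operator with $C^{\mu}$ coefficients, apply the flat-space parabolic estimate of \cite{RefWorks:65} to each localized piece, and sum. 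This yields the solution together with $\|\Psi\|_{q,M\times(\tau,T)}^{(2)}\le C_{q,T-\tau}\|\tilde\vartheta\|_{q,M\times(\tau,T)}$; because the operator is autonomous in $s$, the constant depends on the length $T-\tau$ but not on $\tau$, which is exactly what the later uniform-in-time arguments require.

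For $(\ref{ajj3})$ with $\kappa_1=0$ and $\kappa_2>0$, the boundary condition collapses to the Dirichlet condition $\varphi=\kappa_2^{-1}\Psi$ on $M\times(\tau,T)$. After the same time reversal the problem becomes
\[\varphi_s=d\,\Delta\varphi+\vartheta\ \text{ in }\Omega,\qquad \varphi=\kappa_2^{-1}\Psi\ \text{ on }M,\qquad \varphi(\cdot,0)=0,\]
a linear parabolic problem in $\Omega$ with nonhomogeneous Dirichlet data. The zeroth-order compatibility condition holds automatically, since at $s=0$ both the initial datum and (by the first part) the boundary value $\kappa_2^{-1}\Psi(\cdot,0)$ vanish. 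The $L_q$ solvability theorem of \cite{RefWorks:65} then produces a unique $\varphi\in W_q^{2,1}(\Omega\times(\tau,T))$ obeying an estimate of the form
\[\|\varphi\|_{q,\Omega\times(\tau,T)}^{(2)}\le C\Big(\|\vartheta\|_{q,\Omega\times(\tau,T)}+\kappa_2^{-1}\|\Psi\|_{W_q^{2-1/q,\,1-1/(2q)}(M\times(\tau,T))}\Big),\]
where the boundary term is measured in the anisotropic $W_q^{l,l/2}$ trace space (with $l=2-1/q$) described in the function-space subsection above.

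The key linking step, and the one I expect to be the main technical point, is to absorb the boundary norm of $\Psi$ back into the first estimate. Because $\Psi\in W_q^{2,1}(M\times(\tau,T))$ while $2-\tfrac1q<2$ and $1-\tfrac1{2q}<1$, the embedding $W_q^{2,1}\hookrightarrow W_q^{2-1/q,\,1-1/(2q)}$ on the parabolic cylinder $M\times(\tau,T)$ bounds $\|\Psi\|_{W_q^{2-1/q,1-1/(2q)}}$ by $\|\Psi\|_{q,M\times(\tau,T)}^{(2)}$, which the first part already controls by $\|\tilde\vartheta\|_{q,M\times(\tau,T)}$; chaining the two inequalities gives the asserted bound in terms of $\|\tilde\vartheta\|$ and $\|\vartheta\|$ alone, with a constant depending only on $q$, $T-\tau$, $d$, $\tilde d$, $\kappa_1$, $\kappa_2$, and the geometry of $M$. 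Care is needed to confirm that this embedding constant, like the regularity constants, is independent of $\tau$, which again follows from translation invariance of the operators in $s$. Finally, nonnegativity comes from the maximum principle applied to the time-reversed problems: $\tilde\vartheta\ge 0$ with zero initial data forces $\Psi\ge 0$, hence $\kappa_2^{-1}\Psi\ge 0$, and then $\vartheta\ge 0$ together with nonnegative Dirichlet data and zero initial data forces $\varphi\ge 0$.
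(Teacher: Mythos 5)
Your proposal is correct and follows essentially the same route as the paper, which does not prove Lemma \ref{manifold} in-text but defers to sections 4 and 5 of \cite{RefWorks:2015}: there, too, the backward problems are converted to forward ones by the substitution $s=T-t$, the manifold equation is handled by localization and the flat $L_q$ parabolic theory of \cite{RefWorks:65}, the Dirichlet problem for $\varphi$ is solved via the nonhomogeneous boundary-data theorem with the lateral data measured in $W_q^{2-1/q,\,1-1/(2q)}(M\times(\tau,T))$ and controlled by $\Vert\Psi\Vert^{(2)}_{q,M\times(\tau,T)}$, and nonnegativity follows from the maximum principle. Your additional remarks on the automatic zeroth-order compatibility at $s=0$ and on the $\tau$-independence of the constants via translation invariance are exactly the points the cited proof relies on.
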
\\
\begin{lemma}\label{con}
Suppose $1<q\leq r\leq \frac{(n+2)q}{(n+2)-2q}$. Then there exists $\tilde c>0$ depending on $ n, q, T-\tau$ and $\Omega$ such that if $\varphi\in W_q^{2,1}(\Omega\times(\tau,T))$, then 
\[  \Vert\varphi\Vert _{r,\Omega\times(\tau,T)}\leq \tilde c \Vert \varphi\Vert^{(2)}_{q,\Omega\times(\tau,T)}\] 
\end{lemma}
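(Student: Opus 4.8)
The plan is to recognize this as the parabolic (anisotropic) Sobolev embedding $W_q^{2,1}(\Omega\times(\tau,T))\hookrightarrow L_r(\Omega\times(\tau,T))$ and to reduce it to its critical case. The exponent $r^\ast:=\frac{(n+2)q}{(n+2)-2q}$ is precisely the critical Sobolev exponent for the parabolic scaling $(x,t)\mapsto(\lambda x,\lambda^2 t)$, under which space-time Lebesgue measure scales by $\lambda^{n+2}$; the relevant homogeneous dimension is therefore $N=n+2$, and the two spatial (equivalently, one temporal) derivatives controlled in $W_q^{2,1}$ supply smoothness of order $2$. Note that the hypothesis $q\le r\le r^\ast$ forces $2q<n+2$, so $r^\ast$ is finite and positive. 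Since $\Omega\times(\tau,T)$ has finite measure $|\Omega|(T-\tau)$, once the critical embedding $\Vert\varphi\Vert_{r^\ast,\Omega\times(\tau,T)}\le C\Vert\varphi\Vert^{(2)}_{q,\Omega\times(\tau,T)}$ is established, every exponent $q\le r\le r^\ast$ follows by H\"older's inequality, the factor $(|\Omega|(T-\tau))^{1/r-1/r^\ast}$ accounting for the stated dependence on $\Omega$ and $T-\tau$.

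To prove the critical embedding I would first extend $\varphi$ to a function $\bar\varphi$ on all of $\mathbb{R}^n\times\mathbb{R}$ with $\Vert\bar\varphi\Vert^{(2)}_{q,\mathbb{R}^n\times\mathbb{R}}\le C\Vert\varphi\Vert^{(2)}_{q,\Omega\times(\tau,T)}$; this uses a bounded Sobolev extension operator in the spatial variable (available because $\partial\Omega$ is of class $C^{2+\mu}$ and $\Omega$ is bounded and lies locally on one side of its boundary) together with a cutoff-and-reflection extension in time. Setting $g:=(\partial_t-\Delta)\bar\varphi$, one has $g\in L_q(\mathbb{R}^{n+1})$ with $\Vert g\Vert_{q}\le C\Vert\varphi\Vert^{(2)}_{q,\Omega\times(\tau,T)}$, and the Duhamel representation gives $\bar\varphi=\Gamma\ast g$, where $\Gamma(x,t)=(4\pi t)^{-n/2}e^{-|x|^2/4t}$ for $t>0$ and $\Gamma\equiv 0$ for $t\le 0$ is the heat kernel.

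The decisive estimate is then a Hardy--Littlewood--Sobolev inequality adapted to the parabolic metric $\rho\big((x,t),(y,s)\big)=\max\{|x-y|,|t-s|^{1/2}\}$. The kernel $\Gamma$ is parabolically homogeneous of degree $-n=-(N-2)$ with respect to the scaling above, so convolution with $\Gamma$ behaves exactly like a Riesz potential of order $2$ in the homogeneous dimension $N=n+2$. The anisotropic HLS inequality then yields $\Vert\Gamma\ast g\Vert_{r^\ast,\mathbb{R}^{n+1}}\le C\Vert g\Vert_{q,\mathbb{R}^{n+1}}$ with exactly $\frac{1}{r^\ast}=\frac1q-\frac{2}{n+2}$. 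Restricting $\bar\varphi=\Gamma\ast g$ back to $\Omega\times(\tau,T)$ and combining with the bound on $\Vert g\Vert_q$ completes the critical embedding.

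I expect the main obstacle to be twofold: first, establishing the genuinely anisotropic HLS estimate (the isotropic inequality on $\mathbb{R}^{n+1}$ produces the wrong exponent, so the parabolic metric and the correct homogeneity of $\Gamma$ must be used throughout), and second, carrying out the extension while keeping explicit control of the dependence on $T-\tau$, which enters both through the time extension and through the final H\"older step. Because all of this is standard for anisotropic Sobolev spaces, the most economical route in practice is to invoke the general embedding estimate of Ladyzhenskaya, Solonnikov and Ural'tseva (Lemma 3.3 of Chapter 2 of \cite{RefWorks:65}), of which the present statement is a direct specialization.
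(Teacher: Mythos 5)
Your proposal is correct, and in the end it lands exactly where the paper does: the paper gives no argument of its own for this lemma beyond the remark that it ``follows from Lemma 3.3 of chapter 2 in \cite{RefWorks:65}'', which is precisely the citation you offer as the economical route. What you add is a genuine, self-contained reconstruction of that embedding: reduce to the critical exponent $r^\ast=\frac{(n+2)q}{(n+2)-2q}$ and recover subcritical $r$ by H\"older on the finite-measure cylinder; extend to $\mathbb{R}^{n+1}$; write $\bar\varphi=\Gamma\ast g$ with $g=(\partial_t-\Delta)\bar\varphi\in L_q$; and close with a fractional-integration estimate in the parabolic homogeneous dimension $N=n+2$. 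This is sound, and your observation that the hypothesis is vacuous unless $2q<n+2$ is a correct reading of the statement. Two refinements are worth noting. First, your claim that ``the isotropic inequality on $\mathbb{R}^{n+1}$ produces the wrong exponent'' slightly misdiagnoses where the anisotropy enters: the parabolic homogeneity $\Gamma(\lambda x,\lambda^2 t)=\lambda^{-n}\Gamma(x,t)$ shows $\Gamma\in L^{\frac{n+2}{n},\infty}(\mathbb{R}^{n+1})$, and then the ordinary (isotropic) weak-type Young inequality on the group $\mathbb{R}^{n+1}$ already yields $\Vert\Gamma\ast g\Vert_{r^\ast}\leq C\Vert g\Vert_q$ with $\frac{1}{r^\ast}=\frac1q-\frac{2}{n+2}$, using $q>1$ and $r^\ast<\infty$; no genuinely anisotropic HLS machinery is needed, since the parabolic scaling is fully encoded in the weak-Lebesgue norm of the kernel. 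Second, for the Duhamel identity $\bar\varphi=\Gamma\ast g$ to hold you should make $\bar\varphi$ compactly supported in time via a cutoff $\psi$ (absorbing $\psi'\bar\varphi$ into $g$) and invoke forward uniqueness for the heat equation; the resulting factor of order $(T-\tau)^{-1}$ from $\psi'$ is harmless because the constant is allowed to depend on $T-\tau$. With those details supplied, your sketch is a complete proof, and it buys transparency about where the exponent $\frac{(n+2)q}{(n+2)-2q}$ comes from, at the cost of the extension-operator and representation technicalities that the paper's bare citation to Ladyzhenskaya--Solonnikov--Ural'tseva avoids.
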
\\
\begin{lemma}\label{flat}
Let $1<q< n+2$ and $1<r\leq \frac{(n+1)q}{n+2-q}$. There exists a constant $\hat C>0$ depending on $q,T-\tau,M$ and $n$   such that if $\varphi\in W^{2,1}_q(\Omega\times(\tau,T))$, then \[\left\Vert\frac{\partial\varphi}{\partial\eta}\right\Vert_{r,M\times(\tau,T)}\leq \hat C {\left \Vert \varphi\right\Vert}^{(2)}_{q,\Omega\times(\tau,T)}\]

\end{lemma}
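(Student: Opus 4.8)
The plan is to establish the bound in two stages — a parabolic trace step followed by an anisotropic Sobolev embedding on the lateral boundary — after reducing the curved geometry of $M$ to the flat case by localization. The guiding bookkeeping is that of parabolic (anisotropic) dimension: the cylinder $\Omega\times(\tau,T)$ carries homogeneous dimension $n+2$, since time scales like the square of the spatial variable, while the lateral boundary $M\times(\tau,T)$ carries dimension $(n-1)+2=n+1$. A function $\varphi\in W^{2,1}_q(\Omega\times(\tau,T))$ has spatial gradient $\nabla\varphi$ of anisotropic smoothness one, i.e. $\nabla\varphi\in W^{1,1/2}_q$.

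\textbf{Step 1 (trace).} I would invoke the parabolic trace theorem of Ladyzhenskaya--Solonnikov--Uraltseva \cite{RefWorks:65} to conclude that the first spatial derivatives of $\varphi$, and in particular the normal derivative $\partial\varphi/\partial\eta$, admit traces on $M\times(\tau,T)$ belonging to the anisotropic fractional space $W^{s,s/2}_q(M\times(\tau,T))$ with $s=1-\tfrac{1}{q}$, together with the estimate
\[\left\|\frac{\partial\varphi}{\partial\eta}\right\|^{(s)}_{q,M\times(\tau,T)}\le C_1\,\|\varphi\|^{(2)}_{q,\Omega\times(\tau,T)}.\]
The loss of $\tfrac{1}{q}$ derivatives is the usual trace loss, and the hypothesis $q>1$ is exactly what guarantees $s>0$, so that the trace lands in a genuine fractional space.

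\textbf{Step 2 (embedding and conclusion).} On the $(n+1)$-dimensional parabolic manifold $M\times(\tau,T)$ I would apply the anisotropic Sobolev embedding: when $s<\tfrac{n+1}{q}$ one has $W^{s,s/2}_q\hookrightarrow L_r$ with Sobolev exponent determined by
\[\frac{1}{r}=\frac{1}{q}-\frac{s}{n+1}.\]
Substituting $s=1-\tfrac1q$ gives $r=\tfrac{q(n+1)}{n+2-q}$, and the subcriticality requirement $s<\tfrac{n+1}{q}$ reduces precisely to $q<n+2$; for smaller $r$ down to $1$ the embedding follows from the finite measure of $M\times(\tau,T)$ and H\"older's inequality. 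This yields
\[\left\|\frac{\partial\varphi}{\partial\eta}\right\|_{r,M\times(\tau,T)}\le C_2\left\|\frac{\partial\varphi}{\partial\eta}\right\|^{(s)}_{q,M\times(\tau,T)},\]
and chaining the two inequalities produces the lemma with $\hat C=C_1C_2$, whose dependence is exactly on $q$, $n$, $M$ and $T-\tau$.

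\textbf{Main obstacle.} Since $M$ is only a $C^{2+\mu}$ hypersurface rather than a hyperplane, both the trace theorem and the embedding are intrinsically flat-space statements, and the genuine work lies in the localization. I would cover $M$ by finitely many coordinate patches, use a subordinate partition of unity to split $\varphi$, flatten each patch by a $C^{2+\mu}$ change of variables — the $C^{2+\mu}$ regularity being precisely what controls the transformed second-order coefficients and keeps the $W^{2,1}_q$ and the boundary fractional scales equivalent — apply the flat estimates locally, and patch back. Tracking the uniformity of the constants over the finite cover, and the correct dependence on $T-\tau$ through the parabolic scaling, is the technical heart of the argument; it is essentially the computation carried out in Sections 4--5 of \cite{RefWorks:2015}.
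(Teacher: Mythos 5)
Your proposal is correct, but note first that the paper itself contains no proof of this lemma: it is quoted verbatim, with the proof deferred to sections 4--5 of \cite{RefWorks:2015}, where the estimate is obtained by flattening $M$ via a finite atlas and partition of unity and then invoking the combined trace--embedding inequality of Lemma 3.3, Chapter II of \cite{RefWorks:65}, which bounds the trace of first spatial derivatives on the lateral boundary directly in terms of $\Vert\varphi\Vert^{(2)}_{q,\Omega\times(\tau,T)}$ with the exponent restriction $r\le\frac{(n+1)q}{n+2-q}$ already built in. Your argument factors that single inequality into two standard steps --- trace into $W^{1-\frac1q,\frac12(1-\frac1q)}_q(M\times(\tau,T))$, then the anisotropic embedding on the boundary cylinder of homogeneous dimension $(n-1)+2=n+1$ --- and your bookkeeping is exactly right: $\frac1r=\frac1q-\frac{s}{n+1}$ with $s=1-\frac1q$ gives $r=\frac{(n+1)q}{n+2-q}$, and the subcriticality condition $sq<n+1$ is equivalent to $q<n+2$. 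What the two-step route buys is modularity: the fractional space in Step 1 is precisely the natural Neumann data space for the parabolic problem, so the trace estimate is reusable elsewhere. What the one-step quotation of \cite{RefWorks:65} buys is that the critical endpoint $r=\frac{(n+1)q}{n+2-q}$ comes for free, whereas in your version you should add a word on why the fractional embedding holds with equality in the exponent (it does, since $W^{s,s/2}_q$ is of Besov type $B^s_{q,q}$ and $q\le r$, but endpoints of fractional embeddings are the one place such chains can fail). Your localization discussion matches the cited proof: $C^{2+\mu}$ charts preserve the $W^{2,1}_q$ scale, and the finite cover keeps $\hat C$ depending only on $q$, $n$, $M$ and $T-\tau$.
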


\begin{lemma}\label{Hol}Suppose $q>\frac{n+2}{2}$. There exists a constant $\hat c_1, \hat c_2$ depending on $n, q$,$T- \tau$, and $\Omega$ such that if $\varphi \in W_q^{2,1}({\Omega\times(\tau,T)})$ and  $\Psi \in W_q^{2,1}({M\times(\tau,T)})$, then \[ \Vert\varphi \Vert_{\infty,\Omega\times(\tau,T)}\leq \hat c_1 \Vert \varphi\Vert^{(2)}_{q,\Omega\times(\tau,T)}\quad \text{and}\quad \Vert\Psi \Vert_{\infty,M\times(\tau,T)}\leq \hat c_2 \Vert \Psi\Vert^{(2)}_{q,M\times(\tau,T)} \] Moreover, if $q>n+2$, there exists a constant $\hat c_3>0$ depending on $q,T-\tau,M$ and $n$ such that if $\varphi \in W_q^{2,1}({\Omega\times(\tau,T)})$, then \[\left\Vert\frac{\partial\varphi}{\partial\eta}\right\Vert_{\infty,M\times(\tau,T)}\leq \hat c_3 {\left \Vert \varphi\right\Vert}^{(2)}_{q,\Omega\times(\tau,T)}\] 
\end{lemma}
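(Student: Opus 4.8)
The plan is to recognize all three estimates as special cases of the anisotropic (parabolic) Sobolev embedding theorem for the space $W_q^{2,1}$, which is exactly the content of Lemma 3.3 of Chapter 2 in \cite{RefWorks:65}. The organizing principle is parabolic scaling, under which time counts as two spatial variables: the cylinder $\Omega\times(\tau,T)$ carries effective dimension $n+2$, whereas $M\times(\tau,T)$, with $\dim M=n-1$, carries effective dimension $(n-1)+2=n+1$. Since membership in $W_q^{2,1}$ controls two (tangential) spatial derivatives together with one time derivative, the borderline for a continuous---hence bounded---embedding is that $2q$ exceed the effective dimension, and one gains Hölder continuity once $q$ is strictly larger.

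For the first two inequalities I would simply apply this embedding on each cylinder. On $\Omega\times(\tau,T)$ the condition $2q>n+2$, i.e. $q>\frac{n+2}{2}$, yields $\varphi\in C(\overline{\Omega\times(\tau,T)})$ with $\|\varphi\|_{\infty,\Omega\times(\tau,T)}$ bounded by the embedding constant $\hat c_1$ times $\|\varphi\|^{(2)}_{q,\Omega\times(\tau,T)}$, giving the first estimate. On $M\times(\tau,T)$ the analogous requirement is only $q>\frac{n+1}{2}$, which is implied by $q>\frac{n+2}{2}$; after transferring the embedding to the curved manifold through a finite atlas of $C^{2+\mu}$ charts and a subordinate partition of unity (so that $\Delta_M$ and the intrinsic norm $\|\cdot\|^{(2)}_{q,M\times(\tau,T)}$ reduce to the flat model), one obtains the second estimate for $\Psi$. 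The constants can be taken to depend only on $n$, $q$, $\Omega$ and the length $T-\tau$, since all spaces and operators are invariant under translation in $t$.

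For the third inequality I would sharpen the embedding. When $q>n+2$, the effective dimension of $\Omega\times(\tau,T)$, the same theorem gives $\varphi\in C^{1+\beta,(1+\beta)/2}(\overline{\Omega\times(\tau,T)})$ with $\beta=1-\frac{n+2}{q}>0$; in particular the full gradient $\nabla\varphi$ extends continuously, hence boundedly, up to the boundary. Restricting to $M\times(\tau,T)$ and pairing with the outward unit normal $\eta$---a bounded, smooth vector field because $M\in C^{2+\mu}$---gives $\frac{\partial\varphi}{\partial\eta}=\nabla\varphi\cdot\eta$ and therefore $\left\|\frac{\partial\varphi}{\partial\eta}\right\|_{\infty,M\times(\tau,T)}\le\|\nabla\varphi\|_{\infty,\Omega\times(\tau,T)}\le\hat c_3\|\varphi\|^{(2)}_{q,\Omega\times(\tau,T)}$. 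Note that Lemma \ref{flat} is of no help here, since it is confined to the complementary range $q<n+2$; the $L_\infty$ bound must come from the Hölder embedding directly.

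The main obstacle is not any single hard estimate but the careful bookkeeping of the anisotropic scaling and of the uniformity of constants. Concretely, I would need to verify that the dimension counts above are the sharp ones, that the embedding constants depend on the time interval only through its length $T-\tau$, and to justify the reduction of the manifold estimate to the half-space model via local parametrizations. As a consistency check on the threshold in the third estimate, an alternative route avoids the Hölder embedding entirely: take the boundary trace $\nabla\varphi|_{M\times(\tau,T)}\in W_q^{1-1/q,(1-1/q)/2}(M\times(\tau,T))$ and then embed into $L_\infty$, which requires $1-\frac{1}{q}>\frac{n+1}{q}$, i.e. again $q>n+2$---reassuringly the same condition.
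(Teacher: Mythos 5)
Your proposal is correct and matches the paper's route: the paper proves Lemma \ref{Hol} simply by citing Lemma 3.3 of Chapter 2 in \cite{RefWorks:65}, the anisotropic embedding theorem for $W_q^{2,1}$ that you invoke, with the same dimension counts ($n+2$ for $\Omega\times(\tau,T)$, $n+1$ for $M\times(\tau,T)$, and $q>n+2$ for the gradient bound). Your elaborations---the chart-based transfer to $M$, the translation invariance in $t$, and the trace-space consistency check---are all sound fillings-in of that citation rather than a different argument.
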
\\
\begin{lemma}\label{bw}
Suppose $q>n+1$ and $T>0$ and $\theta\in L_q(\Omega\times(0, T))$, $\gamma\in L_q(M\times(0, T))$ and  $\varphi_0\in W^{2}_q(\Omega)$ such that  \[  d\frac{\partial {\varphi_0}}{\partial \eta} =\gamma(x,0)\quad{\text {on $M$}}\] Then the unique solution to \begin{align}\label{bro}
 \varphi_t\nonumber&= d\Delta \varphi +\theta&
 x\in \Omega,\quad &0<t< T
\\ d\frac{\partial \varphi}{\partial \eta}&=\gamma &x\in M,\quad &0<t< T\\\nonumber
\varphi&= \varphi_0 & x\in\Omega ,\quad& t=0
\end{align}is continuous on  ${\overline\Omega}\times[0,  T]$, and there exists $C_{p,T}>0$ independent of $\theta, \gamma$ and $\varphi_0$ such that \[ \Vert \varphi\Vert_{\infty,\Omega_{ T}}\leq C_{p,T}( \Vert \theta\Vert_{q,\Omega_{ T}}+\Vert \gamma\Vert_{q,M_{ T}}+\Vert \varphi_0\Vert^{(2)}_{q,\Omega})\]
\end{lemma}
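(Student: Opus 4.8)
The plan is to obtain the sup-norm estimate by a transposition (duality) argument and then to recover existence and continuity by smooth approximation. The reason for using duality rather than straight maximum regularity is that the boundary datum $\gamma$ is only assumed to lie in $L_q(M_T)$, which is strictly below the natural trace space $W^{1-1/q,(1-1/q)/2}_q(M_T)$ required to control $\varphi$ in $W^{2,1}_q(\Omega_T)$; so the direct route through $W^{2,1}_q(\Omega_T)\hookrightarrow L_\infty$ (available here since $q>n+1>\tfrac{n+2}{2}$, cf. Lemma \ref{Hol}) is not applicable to the $\gamma$-contribution. Since $(\ref{bro})$ is linear, I would fix a test function $\vartheta\in L_1(\Omega_T)$ with $\Vert\vartheta\Vert_{1,\Omega_T}\le 1$ and let $\phi$ solve the backward homogeneous Neumann problem $-\phi_t=d\Delta\phi+\vartheta$ on $\Omega_T$, $d\,\partial\phi/\partial\eta=0$ on $M_T$, $\phi(\cdot,T)=0$; such a $\phi$ exists and is smooth when $\vartheta$ is smooth, by the standard theory for the Neumann heat semigroup. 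Multiplying the equation for $\varphi$ by $\phi$ and integrating by parts in both $t$ and $x$ (using $\partial\phi/\partial\eta=0$, $d\,\partial\varphi/\partial\eta=\gamma$, $\phi(\cdot,T)=0$, $\varphi(\cdot,0)=\varphi_0$) yields the identity
\[ \int_{\Omega_T}\varphi\,\vartheta=\int_{\Omega_T}\theta\,\phi+\int_{M_T}\gamma\,\phi+\int_{\Omega}\varphi_0\,\phi(\cdot,0), \]
which I would first prove for smooth data, where every integration by parts is legitimate.

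Taking the supremum over all such $\vartheta$ and using $(L_1)^*=L_\infty$, it suffices to bound each term on the right by the corresponding data norm times $\Vert\vartheta\Vert_{1,\Omega_T}$. The interior term is at most $\Vert\theta\Vert_{q,\Omega_T}\Vert\phi\Vert_{q',\Omega_T}$, and the $L_1\to L_{q'}$ smoothing estimate for the heat semigroup gives $\Vert\phi\Vert_{q',\Omega_T}\le C\Vert\vartheta\Vert_{1,\Omega_T}$, valid throughout our range since $q'<1+\tfrac{2}{n}$. Because $q>n+1>\tfrac n2$, Morrey's embedding gives $W^2_q(\Omega)\hookrightarrow C(\overline\Omega)$, so the initial-data term is at most $\Vert\varphi_0\Vert_\infty\,\Vert\phi(\cdot,0)\Vert_{1,\Omega}\le C\Vert\varphi_0\Vert^{(2)}_{q,\Omega}\Vert\vartheta\Vert_{1,\Omega_T}$, using the $L_1$-contractivity of the conservative Neumann semigroup.

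The hard part, and the main obstacle, is the boundary term $\int_{M_T}\gamma\,\phi$, which I would bound by $\Vert\gamma\Vert_{q,M_T}\Vert\phi\Vert_{q',M_T}$; this requires controlling the boundary trace of $\phi$ in $L_{q'}(M_T)$ by $\Vert\vartheta\Vert_{1,\Omega_T}$. Representing $\phi$ through the Neumann heat kernel $N$ and applying Minkowski's integral inequality reduces the estimate to the uniform bound $\sup_{(y,t)\in\Omega_T}\Vert N(\cdot,y,\cdot-t)\Vert_{q',M_T}\le C$. A Gaussian estimate on $N$ together with the surface integration over $M$ (an $(n-1)$-dimensional set) reduces this, in the worst case of $y$ approaching $M$, to the convergence of $\int_0^{T}\tau^{-(nq'-n+1)/2}\,d\tau$, which holds uniformly precisely when $q'<1+\tfrac1n$, i.e. $q>n+1$. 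Thus the hypothesis enters exactly here and is sharp for this argument.

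Finally, I would upgrade the a priori bound to the full statement by approximation. Choosing smooth $\theta_k\to\theta$ in $L_q(\Omega_T)$, $\gamma_k\to\gamma$ in $L_q(M_T)$ and $\varphi_{0,k}\to\varphi_0$ in $W^2_q(\Omega)$ (respecting the compatibility condition), the classical theory of \cite{RefWorks:65} produces solutions $\varphi_k$ that are continuous on $\overline\Omega\times[0,T]$. By linearity $\varphi_k-\varphi_l$ solves $(\ref{bro})$ with data $(\theta_k-\theta_l,\gamma_k-\gamma_l,\varphi_{0,k}-\varphi_{0,l})$, so the established estimate shows $(\varphi_k)$ is Cauchy in $C(\overline\Omega\times[0,T])$; its uniform limit $\varphi$ is therefore continuous on $\overline\Omega\times[0,T]$, solves $(\ref{bro})$ in the transposition sense, and obeys the asserted bound. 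Uniqueness is immediate from the identity: if $\theta=\gamma=\varphi_0=0$, then $\int_{\Omega_T}\varphi\,\vartheta=0$ for every admissible $\vartheta$, forcing $\varphi\equiv0$.
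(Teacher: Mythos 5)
Your argument is correct, and the accounting of where the hypothesis $q>n+1$ enters is exactly right. One should note first that the paper itself contains no internal proof of Lemma \ref{bw}: it is quoted verbatim from the companion paper \cite{RefWorks:2015}, with the reader referred to sections 4 and 5 there. The route in that reference runs through the representation of $\varphi$ by the Neumann heat kernel and direct H\"older estimates on the kernel's boundary trace; your transposition argument is precisely the adjoint formulation of the same estimate, and both hinge on the identical kernel computation, namely that $\sup_{(y,t)}\Vert N(\cdot,y,\cdot-t)\Vert_{q',M\times(t,T)}<\infty$ reduces to the convergence of $\int_0^T \tau^{-(nq'-n+1)/2}\,d\tau$, which holds if and only if $q'<1+\frac{1}{n}$, i.e.\ $q>n+1$. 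So you have not so much found a different proof as reconstructed the dual form of the cited one; what the duality packaging buys you is a clean separation of the three data contributions and an essentially free uniqueness statement (your final observation that $\theta=\gamma=\varphi_0=0$ forces $\varphi\equiv 0$ through the identity), while the direct representation gives the solution formula explicitly and makes the continuity on $\overline\Omega\times[0,T]$ slightly more transparent. Your remaining steps check out: the $L_1\to L_{q'}$ smoothing needs $\frac{n}{2}(q'-1)<1$, which your range $q'<1+\frac1n$ comfortably satisfies; $W^2_q(\Omega)\hookrightarrow C(\overline\Omega)$ needs only $q>\frac n2$; and the $L_1$-contractivity of the Neumann semigroup handles $\phi(\cdot,0)$.

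Two small points deserve a sentence each if you write this up. First, for smooth $\vartheta$ the backward problem need not have a solution smooth up to the corner $t=T$ unless higher-order compatibility holds; but since $\phi(\cdot,T)=0$ trivially satisfies the zeroth-order compatibility with the homogeneous Neumann condition, $\phi\in W^{2,1}_p$ for every $p$, which is all your integrations by parts require. Second, in the approximation step you need $\gamma_k(\cdot,0)=d\,\partial\varphi_{0,k}/\partial\eta$ on $M$, whereas $\gamma\in L_q(M_T)$ has no well-defined trace at $t=0$; the lemma's hypothesis implicitly presumes such a trace, and one should say explicitly that the $\gamma_k$ are modified on a shrinking time interval near $t=0$ to enforce compatibility without disturbing the $L_q(M_T)$ convergence. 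Neither point is a gap in the mathematics, only in the bookkeeping.
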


\begin{lemma}\label{i}
If $1<q< n$, then $ W^1_q(\Omega)$ embedds continuously into $W_q^{(1-\frac{1}{q} )}(\partial\Omega)$ for $q\leq p\leq q^*=\frac{nq}{n-q}$. In addition, if $\epsilon>0$, there exists $C_{\epsilon}>0$ such that  

\[\Vert v\Vert^2_{2,\partial\Omega}\leq \epsilon {\Vert  v_{x}\Vert}^2_{2,\B }+C_{\epsilon}{\Vert  v\Vert}^2_{2,\Omega }\] for all $v\in W^1_2(\Omega)$.
\end{lemma}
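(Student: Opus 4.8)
The plan is to treat the two assertions of the lemma separately: cite the classical trace theory for the embedding statement, and give a direct, self-contained argument for the interpolation ($\epsilon$-)inequality, which is the part actually used in the bootstrapping of Section~3.

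For the embedding assertion I would reduce to the classical theory of Gagliardo. Using a finite atlas of the $C^{2+\mu}$ boundary $\partial\Omega$ together with a subordinate partition of unity, one flattens $\partial\Omega$ locally and transports the problem to the half-space $\mathbb{R}^n_+$ with boundary $\mathbb{R}^{n-1}$. On the half-space the trace operator is bounded from $W^1_q(\mathbb{R}^n_+)$ into the fractional Sobolev (Slobodeckij) space $W_q^{1-1/q}(\mathbb{R}^{n-1})$, the loss of $1/q$ derivatives being exactly Gagliardo's boundary estimate; patching the local pieces back through the partition of unity yields the continuous trace $W^1_q(\Omega)\to W_q^{1-1/q}(\partial\Omega)$. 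The continuous embedding into $L^p(\partial\Omega)$ for $p$ in the stated range then follows from the Sobolev embedding of $W_q^{1-1/q}$ over the $(n-1)$-dimensional manifold $\partial\Omega$ into $L^p(\partial\Omega)$.

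For the interpolation inequality I would argue directly via the divergence theorem, which has the advantage of producing an explicit constant. Since $\partial\Omega$ is compact and of class $C^{2+\mu}$, the outward unit normal $\eta$ extends to a vector field $h\in C^1(\overline\Omega;\mathbb{R}^n)$ with $h=\eta$ on $\partial\Omega$, so that $h\cdot\eta\equiv 1$ there. For $v\in C^1(\overline\Omega)$, applying the divergence theorem to $v^2 h$ gives
\[
\int_{\partial\Omega} v^2 \, dS = \int_{\partial\Omega} v^2\,(h\cdot\eta)\, dS = \int_\Omega \bigl( v^2\,\mathrm{div}\,h + 2\,v\,(h\cdot\nabla v)\bigr)\, dx.
\]
Bounding $\|h\|_{\infty,\Omega}$ and $\|\mathrm{div}\,h\|_{\infty,\Omega}$ by a constant depending only on $\Omega$, and applying Young's inequality $2|v|\,|\nabla v|\le \delta|\nabla v|^2+\delta^{-1}v^2$ to the second term, one obtains $\|v\|_{2,\partial\Omega}^2 \le \epsilon\,\|\nabla v\|_{2,\Omega}^2 + C_\epsilon\,\|v\|_{2,\Omega}^2$ after choosing $\delta$ proportional to $\epsilon$. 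Since $C^1(\overline\Omega)$ is dense in $W^1_2(\Omega)$ and both sides are continuous in the $W^1_2$ norm (the left side by the $q=2$ case of the trace assertion), the estimate extends to every $v\in W^1_2(\Omega)$.

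The main obstacle is confined to the first assertion: extracting the \emph{sharp} fractional smoothness $1-1/q$ of the trace, rather than mere membership in some $L^p(\partial\Omega)$, requires Gagliardo's precise difference-quotient estimates on the half-space and careful bookkeeping under localization, which is why this part is quoted from \cite{RefWorks:69}. The $\epsilon$-inequality presents no genuine difficulty beyond verifying the existence of the extension field $h$ and tracking the dependence of $C_\epsilon$ on $\epsilon$; alternatively it follows from a compactness-contradiction argument built on the compactness of the trace map $W^1_2(\Omega)\to L^2(\partial\Omega)$ (normalize $\|v_n\|_{2,\partial\Omega}=1$, extract a weak $W^1_2$ limit whose trace has norm $1$ yet which vanishes in $L^2(\Omega)$, a contradiction), though that route forfeits the explicit constant.
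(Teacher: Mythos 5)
Your proposal is correct, but it is worth knowing that the paper does not prove this lemma at all: the authors simply state that it "can be found on page 49 in \cite{RefWorks:69}" (Ladyzhenskaya--Uraltseva), so you have supplied an actual argument where the paper supplies only a citation. Your two-part structure is the standard one behind that citation: localization of the $C^{2+\mu}$ boundary by charts and a partition of unity plus Gagliardo's half-space estimate for the trace into $W^{1-1/q}_q(\partial\Omega)$; and, for the $\epsilon$-inequality, the divergence identity $\int_{\partial\Omega}v^2\,dS=\int_\Omega\bigl(v^2\,\mathrm{div}\,h+2v\,h\cdot\nabla v\bigr)dx$ with $h\in C^1(\overline\Omega;\mathbb{R}^n)$ extending the unit normal (available since $\partial\Omega\in C^{2+\mu}$, e.g.\ via the signed distance function and a cutoff), followed by Young's inequality and density of $C^1(\overline\Omega)$ in $W^1_2(\Omega)$ --- and you sensibly read the paper's $\Vert v_x\Vert_{2,\B}$ as $\Vert\nabla v\Vert_{2,\Omega}$. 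The divergence route buys an explicit $C_\epsilon\sim\epsilon^{-1}$, while your alternative compactness-contradiction argument is also valid but loses the constant; either suffices, since the lemma is only used qualitatively in $(\ref{MM})$. Two small caveats. First, your chain "trace into $W^{1-1/q}_q(\partial\Omega)$, then fractional Sobolev embedding on the $(n-1)$-dimensional manifold" yields $L^p(\partial\Omega)$ only for $p\le (n-1)q/(n-q)$, not the range $p\le q^*=nq/(n-q)$ stated in the lemma (that is the interior exponent); this looseness originates in the paper's statement, but your sentence claiming the "stated range" follows from the fractional embedding promises slightly more than the argument delivers. Second, your appeal to "the $q=2$ case of the trace assertion" to pass to the limit formally requires $n>2$; since the paper allows $n\ge 2$, for $n=2$ you should instead note that continuity of the trace in $W^1_2$ follows directly from your own divergence identity via Cauchy--Schwarz, which closes the density argument in all dimensions.
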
\\

The following lemma allows us to bootstrap $L_1$ estimates to better $L_p$ estimates.
\vspace{.2cm}
\begin{lemma}\label{global_p}
Suppose $\tau\geq 0$ and $q\geq 1$, and $1\leq j\leq k$ and $1\leq i\leq m$, so that both $V_{i,j}1$ and $V_{i,j}2$ are satisfied. If $u_j\in L_q(\Omega\times(\tau,\tau+3))$, $u_j,v_i\in L_q(M\times(\tau,\tau+3))$, and $p=\left(\frac{n+3}{n+2}\right)q$, then there exists $C_{p}>0$ independent of $\tau$ such that  
\begin{align*}
\Vert u_j\Vert_{p,\Omega\times(\tau+1,\tau+3)}&+\Vert u_j\Vert_{p,M\times(\tau+1,\tau+3)}+\Vert v_i\Vert_{p,M\times(\tau+1,\tau+3)}\\
&\leq C_p \left(\Vert u_j\Vert_{q,M\times(\tau,\tau+3)}+\Vert u_j\Vert_{q,\Omega\times(\tau,\tau+3)}+\Vert v_i\Vert_{q,M\times(\tau,\tau+3)}+1\right)
\end{align*}
\end{lemma}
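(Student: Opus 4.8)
The plan is to establish the three $L_p$ bounds by a duality argument against the backward system $(\ref{aj2})$--$(\ref{ajj3})$, using that $(u,v)$ is componentwise nonnegative (Theorem \ref{great}). Set $T=\tau+3$ and fix a smooth cutoff $\psi\colon[\tau,T]\to[0,1]$ with $\psi(\tau)=0$, $\psi\equiv1$ on $[\tau+1,T]$, and $0\le\psi'\le C$. The products $\hat u_j=\psi u_j$ and $\hat v_i=\psi v_i$ vanish at $t=\tau$ and coincide with $u_j,v_i$ on $(\tau+1,T)$; they solve the same equations with the additional forcing $\psi' u_j$, $\psi' v_i$, which are supported in $(\tau,\tau+1)$ and hence dominated by the given $L_q$ data. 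Since $\hat u_j,\hat v_i\ge0$ and $\|f\|_p=\sup\{\int f\,\omega:\omega\ge0,\ \|\omega\|_{p'}\le1\}$, it suffices to bound each of the pairings $\int\!\!\int_{\Omega}\hat u_j\,\vartheta$, $\int\!\!\int_{M}\hat v_i\,\tilde\vartheta$ and $\int\!\!\int_{M}\hat u_j\,g$ by $C(\cdots)\,\|\text{test}\|_{p'}$ for arbitrary nonnegative test functions, where $p'$ is conjugate to $p$; taking suprema recovers the three norms over $\Omega\times(\tau+1,T)$ and $M\times(\tau+1,T)$.

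For the interior norm of $u_j$ and the surface norm of $v_i$ I would solve $(\ref{aj2})$--$(\ref{ajj3})$ with $\kappa_1=0$, $\kappa_2=1$ and nonnegative sources $\vartheta,\tilde\vartheta\in L_{p'}$, so that $\varphi|_M=\Psi$ and, by Lemma \ref{manifold}, $\varphi,\Psi\ge0$ with $\|\varphi\|^{(2)}_{p',\Omega}+\|\Psi\|^{(2)}_{p',M}\le C(\|\vartheta\|_{p'}+\|\tilde\vartheta\|_{p'})$. Multiplying the $\hat u_j$ equation by $\varphi$ and the $\hat v_i$ equation by $\Psi$, integrating in space-time and applying Green's identity yields two duality identities; adding $\sigma$ times the second to the first makes the flux contributions combine into $\int\!\!\int_M\psi(G_j+\sigma F_i)\Psi$. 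Condition $(V_{i,j}1)$ with $\varphi,\Psi\ge0$ bounds this by $\alpha\int\!\!\int_M\psi(u_j+v_i+1)\Psi$ and bounds the interior reaction by $\beta\int\!\!\int_\Omega\psi(u_j+1)\varphi$; the leftover pieces are the cutoff terms and the boundary integral $-d_j\int\!\!\int_M\hat u_j\,\partial_\eta\varphi$. Each is handled by Hölder's inequality, pairing the \emph{given} $L_q$ data against $\|\varphi\|_{q',\Omega}$, $\|\Psi\|_{q',M}$ and $\|\partial_\eta\varphi\|_{q',M}$, which are controlled by the dual regularity through the parabolic embeddings of Lemmas \ref{con}, \ref{flat} and their surface analogues, with Lemma \ref{Hol} covering the endpoint $q'=\infty$. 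This produces the bounds on $\|u_j\|_{p,\Omega}$ and $\|v_i\|_{p,M}$.

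The surface norm of $u_j$ is the delicate point and I expect it to be the main obstacle, because with the Dirichlet dual the trace $u_j|_M$ only appears paired with the uncontrollable datum $\partial_\eta\varphi$, never as a free pairing. To reach it I would use $(\ref{ajj3})$ in its mixed form $\kappa_1=1$, $\kappa_2=0$, taking $\vartheta=0$ and prescribing the boundary datum $\Psi=g$ to be an arbitrary nonnegative function in $L_{p'}(M)$ (rather than solving $(\ref{aj2})$), so that $d_j\partial_\eta\varphi=g$. Integration by parts now places $\int\!\!\int_M\hat u_j\,g$ on the left, while the flux enters as $\int\!\!\int_M\psi G_j\,\varphi$; condition $(V_{i,j}2)$, i.e.\ $G_j\le K_g(u_j+v_i+1)$, together with Hölder against the value trace of $\varphi$ on $M$ and the given $L_q$ data, closes the estimate for $\|u_j\|_{p,M}$. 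This step requires the mixed-boundary analogue of Lemma \ref{manifold} (available from the constructions in \cite{RefWorks:2015}) and the value trace of $W^{2,1}_{p'}(\Omega\times(\tau,T))$ on $M\times(\tau,T)$, and it is here that $(V_{i,j}2)$ is essential.

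Finally I would verify the exponent bookkeeping. The gain $p=\tfrac{n+3}{n+2}q$ is exactly the value for which the dual index $p'$ stays above the thresholds of every embedding used, uniformly for $q\ge1$; the binding case is $q=1$, where $q'=\infty$ and $p'=n+3>n+2$, which is precisely the hypothesis of Lemma \ref{Hol} needed to control $\|\partial_\eta\varphi\|_{\infty,M}$ in the interior estimate. Since $T-\tau=3$ is fixed, all constants from Lemmas \ref{manifold}--\ref{Hol} depend only on $n$, $p$, $\Omega$ and $M$ and not on $\tau$; summing the three contributions then gives the asserted inequality with $C_p$ independent of $\tau$.
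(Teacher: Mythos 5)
Your first stage is essentially the paper's own argument: duality against $(\ref{aj2})$--$(\ref{ajj3})$ with $\kappa_1=0$, $\kappa_2=1$, a time cutoff to kill the data at $t=\tau$, condition $(V_{i,j}1)$ to absorb the combined flux $\sigma F_i+G_j$ and the interior reaction $H_j$, and then H\"older together with Lemmas \ref{manifold}, \ref{con} and \ref{flat} (Lemma \ref{Hol} in the endpoint case $q=1$, $q'=\infty$) to control $\varphi$, $\Psi$ and $\partial_\eta\varphi$ in the dual norms. The only cosmetic difference is that the paper multiplies the cutoff into the dual pair, setting $z=\psi\varphi$ and $w=\psi\Psi$, rather than into $(u_j,v_i)$; either way the time-boundary terms vanish and the same inequality results, giving the bounds on $\Vert u_j\Vert_{p,\Omega\times(\tau+\frac12,\tau+3)}$ and $\Vert v_i\Vert_{p,M\times(\tau+\frac12,\tau+3)}$ with constants independent of $\tau$ since $T-\tau=3$ is fixed.

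The genuine gap is your treatment of the trace norm $\Vert u_j\Vert_{p,M\times(\tau+1,\tau+3)}$. You propose a second duality with $\kappa_1=1$, $\kappa_2=0$, prescribing an arbitrary nonnegative $g\in L_{p'}(M\times(\tau,\tau+3))$ as conormal datum $d_j\frac{\partial\varphi}{\partial\eta}=g$, and you invoke a ``mixed-boundary analogue of Lemma \ref{manifold}.'' No such analogue holds at the regularity level you need: $W^{2,1}_{p'}$ solvability of the backward problem requires the Neumann datum to lie in the fractional trace space $W^{1-1/p',\,(1-1/p')/2}_{p'}(M\times(\tau,\tau+3))$, not merely in $L_{p'}$, which is exactly why Lemma \ref{manifold} is stated only for $\kappa_1=0$, $\kappa_2>0$, where $\Psi$ enters as a Dirichlet trace and carries full $W^{2,1}_{p'}$ regularity from $(\ref{aj2})$. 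With $g$ only in $L_{p'}$ your dual $\varphi$ is not in $W^{2,1}_{p'}$, so Lemmas \ref{con}, \ref{flat} and \ref{Hol} are unavailable to control $\Vert\varphi\Vert_{q',M\times(\tau,\tau+3)}$; repairing this would require sharp layer-potential estimates for the Neumann problem with rough data and a fresh exponent count, none of which is in the paper's toolbox. The paper instead closes this step by an elementary energy argument: with a second cutoff $\tilde\psi$ supported in $t\geq\tau+\frac12$ and $U_j=\tilde\psi u_j$, multiply the $U_j$ equation by $U_j^{p-1}$ and integrate over $\Omega\times(\tau+\frac12,\tau+3)$; the boundary flux is bounded via $(V_{i,j}2)$ and Young's inequality by $\int_{\tau+\frac12}^{\tau+3}\int_M U_j^p$ plus terms in $u_j^p$ on $\Omega$ and $v_i^p$ on $M$ that are already controlled by the duality stage, and the surface term is then absorbed by applying the $\epsilon$-trace inequality of Lemma \ref{i} to $U_j^{p/2}$, choosing $\epsilon$ so that $1-3K_g\bigl(\frac{p^2}{4d_j(p-1)}\bigr)\epsilon>0$. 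Substituting this estimate for your second duality argument repairs the proof, and since it consumes only the bounds from the first stage, no new constraints on $p=\frac{n+3}{n+2}q$ arise.
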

\begin{proof} 
Note that $p>1$. Set $p'=\frac{p}{p-1}$, and consider $(\ref{aj2})$, $(\ref{ajj3})$ on $(\tau,\tau+3)$, with $\kappa_1=0$, $\kappa_2=1$, $\tilde\vartheta, \vartheta\geq 0$, $ {\Vert \tilde\vartheta\parallel}_{p',(M\times(\tau,\tau+3))}=1$ and $ {\Vert \vartheta\parallel}_{p',(\Omega\times(\tau,\tau+3))}=1$. Let $\varphi$ and $\Psi$ be the nonnegative solutions to $(\ref{aj2})$ and $(\ref{ajj3})$, and define a cut off function $\psi\in{C_{0}}^{\infty}(\mathbb{R},[0,1])$  such that $\psi(t)=1$ for all $t\geq \tau+\frac{1}{2}$ and $\psi(t)=0$ for all $t\leq \tau$. In addition, define $
w(x,t)= \psi(t)\Psi(x,t) $ and
$ z(x,t)= \psi(t)\varphi(x,t)$.
 From construction, $ w(x,t)= \Psi(x,t)$ and $z(x,t)= \varphi(x,t)$
 for all $(x,t)\in M\times (\tau+\frac{1}{2}, \tau+3)$ and $(x,t)\in \Omega\times (\tau+\frac{1}{2}, \tau+3)$ respectively. Also $w$ and $z$ satisfy the system 
\begin{align}\label{aj33} z_t&=- d\Delta z-\psi(t)\vartheta+\psi'(t)\varphi(x,t)
&( x,t)\in \Omega\times (\tau,\tau+3)\nonumber
\\ w_t&=-\tilde d\Delta_M w-\psi(t)\tilde\vartheta+\psi'(t)\Psi(t) & (x,t)\in M\times (\tau,\tau+3)\nonumber\\ w&=z &(x,t)\in M\times (\tau,\tau+3)\\
z&=0 &x\in\Omega ,\quad t=\tau+3\nonumber\\ w&=0 &x\in M ,\quad t=\tau+3\nonumber\end{align}
Then
\begin{align*}
\int_\tau^{\tau+3}\int_{\Omega} u_j\psi\vartheta &+\int_\tau^{\tau+3}\int_{M} v_i\psi\tilde\vartheta  \\ =\int_\tau^{\tau+3}\int_{\Omega} u_j(&-z_t-d\Delta z+\psi'\varphi) +\int_\tau^{\tau+3}\int_{M} v_i(-w_t-\tilde d\Delta_M w+\psi' \Psi) \\
=\int_\tau^{\tau+3}\int_{\Omega}(z (&{u_j}_t-d\Delta u_j)+u_j\psi'\varphi) +\int_\tau^{\tau+3}\int_{M} (w({v_i}_t-\tilde d\Delta_M v_i)+v_i\psi'\Psi )\\ -d\int_\tau^{\tau+3}\int_{M} &u_j\frac{\partial z}{\partial\eta}+d\int_\tau^{\tau+3}\int_{M} \frac{\partial u_j}{\partial\eta}z
\end{align*}
since $w(\cdot,\tau+3)=0$, $z(\cdot,\tau+3)=0$, $w(\cdot,\tau)=0$ and $z(\cdot,\tau)=0$. So, from $(\ref{sy5})$,
\begin{align*}
\int_{\tau+\frac{1}{2}}^{\tau+3}\int_{\Omega} u_j\vartheta +\int_{\tau+\frac{1}{2}}^{\tau+3}\int_{M} v_i\tilde\vartheta  &\leq \int_\tau^{\tau+3}\int_M  (F_j(u,v)+G_i(u,v))w+\int_\tau^{\tau+3}\int_{\Omega} z H_j(u)\\& \quad\quad+\int_\tau^{\tau+3}\int_{M}v_i\psi'\Psi + \int_\tau^{\tau+3}\int_{\Omega}  u_j\psi'\varphi \\& \quad\quad-d\int_\tau^{\tau+3}\int_{M} u_j\frac{\partial z}{\partial\eta}
\end{align*}
Using $(V_{i,j}1)$,
\begin{align}\label{ineqn}
\int_{\tau+\frac{1}{2}}^{\tau+3}\int_{\Omega} u_j\vartheta +\int_{\tau+\frac{1}{2}}^{\tau+3}\int_{M} v_i\tilde\vartheta \nonumber &\leq \int_\tau^{\tau+3}\int_M \alpha(u_j+v_i+1)w+\int_\tau^{\tau+3}\int_{\Omega}\beta(u_j+1)z \\ & \quad\quad + \int_\tau^{\tau+3}\int_{\Omega}  u_j\psi'\varphi+ \int_\tau^{\tau+3}\int_{M}v_i\psi'\Psi -d\int_\tau^{\tau+3}\int_{M} u_j\frac{\partial z}{\partial\eta}
\end{align}
Now we break the argument in two cases.

Case 1: Suppose $q=1$. Then $u_j\in L_1(\Omega\times(\tau,\tau+3))$, $u_j,v_i\in L_1(M\times(\tau,\tau+3))$, $p=\frac{n+3}{n+2}$, and $ p'=n+3$. From Lemma $\ref{manifold}$ and $\ref{Hol}$ and our hypothesis on $\vartheta$ and $\tilde\vartheta$, $\Psi$, $\varphi$ and $\frac{\partial z}{\partial\eta}$ are sup norm bounded independent of $\tau$. Application of H\"older's inequality in $(\ref{ineqn})$ implies there exists $\tilde C_{p}>0$, independent of $\tau$, such that
\[\int_{\tau+\frac{1}{2}}^{\tau+3}\int_{\Omega} u_j\vartheta+\int_{\tau+\frac{1}{2}}^{\tau+3}\int_M v_i\tilde\vartheta\leq \tilde C_p (\Vert u_j\Vert_{1,\Omega\times(\tau,\tau+3)}+\Vert v_i\Vert_{1,M\times(\tau,\tau+3)}+\Vert u_j\Vert_{1,M\times(\tau,\tau+3)}+1)\]
Therefore, from duality, 
\begin{align*}
\Vert u_j\Vert_{p,\Omega\times (\tau+\frac{1}{2},\tau+3)}&+\Vert v_i\Vert_{p,M\times(\tau+\frac{1}{2},\tau+3)}\\&\leq \tilde C_{p}(\Vert u_j\Vert_{1,\Omega\times(\tau,\tau+3)}+\Vert v_i\Vert_{1,M\times(\tau,\tau+3)}+\Vert u_j\Vert_{1,M\times(\tau,\tau+3)}+1)
\end{align*}

Case 2: Suppose $q>1$. Then $u_j\in L_q(\Omega\times(\tau,\tau+3))$ and $u_j,v_i\in L_q(M\times(\tau,\tau+3))$. Applying H\"older's inequality in $(\ref{ineqn})$, and using Lemmas $\ref{manifold}$, $\ref{con}$ and $\ref{flat}$ and our hypothesis on $\vartheta$ and $\tilde\vartheta$, implies there exists $\tilde C_{p}>0$, independent of $\tau$, such that 
\[\int_{\tau+\frac{1}{2}}^{\tau+3}\int_{\Omega} u_j\vartheta+\int_{\tau+\frac{1}{2}}^{\tau+3}\int_M v_i\tilde\vartheta\leq \tilde C_p (\Vert u_j\Vert_{q,\Omega\times(\tau,\tau+3)}+\Vert v_i\Vert_{q,M\times(\tau,\tau+3)}+\Vert u_j\Vert_{q,M\times(\tau,\tau+3)}+1)\]
Therefore, from duality, 
\begin{align*}
\Vert u_j\Vert_{p,\Omega\times (\tau+\frac{1}{2},\tau+3)}&+\Vert v_i\Vert_{p,M\times (\tau+\frac{1}{2},\tau+3)}\\&\leq \tilde C_{p}(\Vert u_j\Vert_{q,\Omega\times(\tau,\tau+3)}+\Vert v_i\Vert_{q,M\times(\tau,\tau+3)}+\Vert u_j\Vert_{q,M\times(\tau,\tau+3)}+1)
\end{align*}

Now, it remains to show that $u_j\in L_{p}(M\times (\tau+1,\tau+3))$. Define a cut off function $\tilde\psi\in{C_{0}}^{\infty}(\mathbb{R},[0,1])$  such that $\tilde\psi(t)=1$ for all $t\geq \tau+1$ and $\tilde\psi(t)=0$ for all $t\leq \tau+\frac{1}{2}$. Consider $U_j=\tilde\psi u_j$, and by definition of $\tilde\psi$, $U_j=u_j$ for all $t\geq \tau+1$ and $U_j(\cdot, \tau+\frac{1}{2})=0$. Multiplying the ${U_j}_t$ equation by $U_j^{p-1}$, and integrating over $\Omega\times (\tau+\frac{1}{2},\tau+3)$, we get 
\begin{align}
\int_{\tau+\frac{1}{2}}^{\tau+3}\int_{\Omega}U_j^{p-1}{U_j}_t &=d_j\int_{\tau+\frac{1}{2}}^{\tau+3}\int_{\Omega}U_j^{p-1}\Delta U_j+\int_{\tau+\frac{1}{2}}^{\tau+3}\int_{\Omega}U_j^{p-1}\tilde\psi H_j(u)-\int_{\tau+\frac{1}{2}}^{\tau+3}\int_{\Omega}U_j^{p-1}\tilde\psi'(t) u_j\nonumber\\ &= d_j\int_{\tau+\frac{1}{2}}^{\tau+3}\int_{M}U_j^{p-1}{\frac{\partial U_j}{\partial\eta}}-d_j\int_{\tau+\frac{1}{2}}^{\tau+3}\int_{\Omega}(p-1)U_j^{p-2}{|\nabla U_j|^{2}}+ \int_{\tau+\frac{1}{2}}^{\tau+3}\int_{\Omega}U_j^{p-1}\psi H_j(u)\nonumber\\
&\quad  -\int_{\tau+\frac{1}{2}}^{\tau+3}\int_{\Omega}U_j^{p-1}\tilde\psi'(t) u_j \nonumber
\end{align}
Using $(V_{i,j}2)$ and $\vert\tilde\psi'(t)\vert\leq K$, we get
\begin{align}\label{MM3}
 \int_{\Omega}\frac{U_j^{p}(\cdot,\tau+3)}{p}&+d_j\int_{\tau+\frac{1}{2}}^{\tau+3}\int_{\Omega}\frac{4(p-1)}{p^{2}}{|\nabla U_j^{\frac{p}{2}}|^{2}}\nonumber\\&\leq K_g\int_{\tau+\frac{1}{2}}^{\tau+3}\int_{M}U_j^{p-1}(u_j+v_i+1)+\beta\int_{\tau+\frac{1}{2}}^{\tau+3}\int_{\Omega} (u_j+1)U_j^{p-1}+K \int_{\tau+\frac{1}{2}}^{\tau+3}\int_{\Omega} u_jU_j^{p-1}\nonumber\\
&\leq K_g\left(\int_{\tau+\frac{1}{2}}^{\tau+3}\int_{M}u_jU_j^{p-1}+v_i U_j^{p-1}+U_j^{p-1}\right)+(\beta+K )\int_{\tau+\frac{1}{2}}^{\tau+3}\int_{\Omega} u_jU_j^{p-1}\nonumber\\&\quad+\beta\int_{\tau+\frac{1}{2}}^{\tau+3}\int_{\Omega} U_j^{p-1}
\end{align}
Applying Young's inequality in $(\ref{MM3})$ gives
\begin{align}\label{MM84}
 \int_{\Omega}\frac{U_j^{p}(\cdot, \tau+3)}{p}+d_j\int_{\tau+\frac{1}{2}}^{\tau+3}\int_{\Omega}\frac{4(p-1)}{p^{2}}{|\nabla U_j^{\frac{p}{2}}|^{2}}&\leq K_g\left(\frac{3p-2}{p}\right)\int_{\tau+\frac{1}{2}}^{\tau+3}\int_{M}U_j^{p}+\frac{(2\beta+K)(p-1)}{p} \int_{\tau+\frac{1}{2}}^{\tau+3}\int_{\Omega} U_j^p\nonumber\\ &\quad \quad +|\Omega|\frac{5\beta}{2p}+ K_g\left(\frac{1}{p}\right)\int_{\tau+\frac{1}{2}}^{\tau+3}\int_{M}v_i^{p}+\frac{5|M| K_g}{2p}\nonumber\\&\quad\quad+\frac{(\beta+K)}{p} \int_{\tau+\frac{1}{2}}^{\tau+3}\int_{\Omega} u_j^p
\end{align}
Also, from Lemma $\ref{i}$ for all $\epsilon>0$ and $\tau>0$, there exists $C_\epsilon>0$ independent of $\tau$, such that
\begin{eqnarray}\label{MM}
\int_{\tau+\frac{1}{2}}^{\tau+3}\int_{M}U_j^{p}\leq C_{\epsilon}\int_{\tau+\frac{1}{2}}^{\tau+3}\int_{\Omega}U_j^{p}+\epsilon\int_{\tau+\frac{1}{2}}^{\tau+3}\int_{\Omega}{|\nabla U_j^{\frac{p}{2}}|^{2}}\quad
\end{eqnarray}
To estimate $\Vert U_j\Vert_{p,M\times(\tau+\frac{1}{2},\tau+3)}$, we use $(\ref{MM84})$ to obtain
\begin{eqnarray}\label{MM4}
\epsilon \int_{\tau+\frac{1}{2}}^{\tau+3}\int_{\Omega}|\nabla U_j^{\frac{p}{2}}|^{2}& \leq \left(\frac{p^2}{4d_j(p-1)}\right)3K_g \epsilon \int_{\tau+\frac{1}{2}}^{\tau+3}\int_{M}U_j^{p}+\epsilon\left(\frac{p^2}{4d_j(p-1)}\right)\left(\frac{(2\beta+K)(p-1)}{p}\right)\int_{\tau+\frac{1}{2}}^{\tau+3}\int_{\Omega} U_j^p\nonumber\\ & +\left(\frac{p^2}{4d_j(p-1)}\right)\left(\epsilon \frac{(\beta+K)}{p} \int_{\tau+\frac{1}{2}}^{\tau+3}\int_{\Omega} u_j^{p}+\epsilon K_g\left(\frac{1}{p}\right)\int_{\tau+\frac{1}{2}}^{\tau+3}\int_{M}v_i^{p}+\epsilon\frac{5|M|K_g}{2p}+\epsilon|\Omega|\frac{5\beta}{2p}\right)\nonumber
\end{eqnarray}
Using $(\ref{MM})$, we have
\begin{align*}
\int_{\tau+\frac{1}{2}}^{\tau+3}\int_{M}U_j^{p}&\leq C_{\epsilon}\int_{\tau+\frac{1}{2}}^{\tau+3}\int_{\Omega}U_j^{p}+3K_g\left(\frac{p^2}{4d_j(p-1)}\right)\epsilon \int_{\tau+\frac{1}{2}}^{\tau+3}\int_{M}U_j^{p}\\ &\quad +\left(\frac{p^2}{4d_j(p-1)}\right)\left(\epsilon \frac{(\beta+K)}{p} \int_{\tau+\frac{1}{2}}^{\tau+3}\int_{\Omega} u_j^{p}+\epsilon K_g\left(\frac{1}{p}\right)\int_{\tau+\frac{1}{2}}^{\tau+3}\int_{M}v_i^{p}+\epsilon\frac{5|M|K_g}{2p}+|\Omega|\frac{5\beta}{2p}\right)\\&\quad+\epsilon\left(\frac{p^2}{4d_j(p-1)}\right)\left(\frac{(2\beta+K)(p-1)}{p}\right)\int_{\tau+\frac{1}{2}}^{\tau+3}\int_{\Omega} U_j^p\nonumber
\end{align*}
Recall, $u_j= U_j$ on $M\times(\tau+1,\tau+3)$. Now, choosing $\epsilon$ such that \[1-3K_g\left(\frac{p^2}{4d_j(p-1)}\right)\epsilon>0\] and using the estimates above for $u_j\in L_p(\Omega\times (\tau+\frac{1}{2},\tau+3))$ and $v_i\in L_p(M\times (\tau+\frac{1}{2},\tau+3))$, independent of $\tau$, we have a bound for $u_j\in L_p(M\times(\tau+1,\tau+3))$, independent of $\tau$.
\end{proof}\\

{\bf Proof of Theorem $\ref{globalu}$:}
From Theorem $\ref{great}$, we have global existence of the solution to $(\ref{sy5})$. Now suppose $1\leq j\leq k$ and $1\leq i\leq m$, such that $V_{i,j}1$ and $V_{i,j}2$ hold. Our solution is uniformly bounded for $t\in [0,1]$. Combining this with Lemma $\ref{global_p}$, if $p_1=\frac{n+3}{n+2}$ then there exists $C_{p_1}\geq 0$  such that \[\Vert u_j\Vert_{p_1,\Omega\times (\tau, \tau+3)}, \Vert u_j\Vert_{p_1,M\times(\tau, \tau+3)}, \Vert v_i\Vert_{p_1,M\times (\tau,\tau+3)}\leq C_{p_1}\]  for all $\tau\geq 0$. Now applying Lemma $\ref{global_p}$, for $q=\frac{n+3}{n+2}$ and $p_2= \left(\frac{n+3}{n+2}\right)^2$, and the uniform bound for our solution for $t\in[0,1]$, there exists $C_{p_2}\geq 0$ such that  \[\Vert u_j\Vert_{p_2,\Omega\times (\tau, \tau+3)}, \Vert u_j\Vert_{p_2,M\times(\tau, \tau+3)}, \Vert v_i\Vert_{p_2,M\times (\tau,\tau+3)}\leq C_{p_2}\] for all $\tau\geq 0$. Repeating the process above, if $p>1$, and $\tau\geq 0$ then there exists $C_{p}\geq 0$, independent of $\tau$, such that \begin{align}\label{p*}\Vert u_j\Vert_{p,\Omega\times (\tau, \tau+3)}, \Vert u_j\Vert_{p,M\times(\tau, \tau+3)}, \Vert v_i\Vert_{p,M\times (\tau,\tau+3)}\leq C_{p} \end{align}

From the hypotheses we are assured that we have estimate $(\ref{p*})$ for each component of $u$ and $v$. Now we convert these $L_p$ estimates to sup norm estimates. For that purpose, let $\tau\geq 0$ and define a cut off function $\psi\in{C_{0}}^{\infty}(\mathbb{R},[0,1])$  such that $\psi(t)=1$ for all $t\geq \tau+1$ and $\psi(t)=0$ for all $t\leq \tau$. In addition, define $
\hat v_i(x,t)= \psi(t) v_i(x,t) $ and
$ \hat u_j(x,t)= \psi(t)  u_j(x,t)$.
 From construction, $ \hat v_i(x,t)=  v_i(x,t)$ and $\hat u_j(x,t)=  u_j(x,t)$
 for all $(x,t)\in M\times (\tau+1, \tau+3)$ and $(x,t)\in \Omega\times (\tau+1, \tau+3)$ respectively. Also $\hat u_j$ and $\hat v_i$ satisfy the system 
\begin{align} {\hat {u_j}}_t&=d_j\Delta \hat {u_j}+\psi'(t) u_j(x,t)+\psi(t) H_j(u)
&( x,t)\in \Omega\times (\tau,\tau+3)\nonumber
\\ \hat {v_i}_t&=\tilde d_i\Delta_M \hat {v_i}+\psi'(t) v_i(x,t)+ \psi(t) F_i(u,v) & (x,t)\in M\times (\tau,\tau+3)\nonumber\\ d_j\frac{\partial \hat {u_j}}{\partial \eta}&=\psi(t) G_j(u,v) &(x,t)\in M\times (\tau,\tau+3)\nonumber \\
\hat {u_j}&=0 &x\in\Omega ,\quad t=\tau\nonumber\\ \hat {v_i}&=0 &x\in M ,\quad t=\tau\nonumber\end{align} 
From $(V_{i,j}2)$ and $(V_{i,j}3)$, $F$ and $G$ are polynomially bounded above. So, consider the system
\begin{align}\label{k} {\hat {u_j}}_t&=d_j\Delta \hat {u_j}+\psi'(t) u_j(x,t)+\psi\beta (u_j+1)
&( x,t)\in \Omega\times (\tau,\tau+3)\nonumber
\\ \hat {v_i}_t&=\tilde d_i\Delta_M \hat {v_i}+\psi'(t) v_i(x,t)+\psi K_f(|u|+|v|+1) & (x,t)\in M\times (\tau,\tau+3)\nonumber\\ d_j\frac{\partial \hat {u_j}}{\partial \eta}&=\psi K_g(u_j+v_i+1) &(x,t)\in M\times (\tau,\tau+3) \\
\hat {u_j}&=0 &x\in\Omega ,\quad t=\tau\nonumber\\ \hat {v_i}&=0 &x\in M ,\quad t=\tau\nonumber\end{align} 
Note that $u_j\leq \hat u$ and $v_i\leq \hat v$ for all $t> 0$. From $(\ref{p*})$, if $q>n+1$, then $K_f(|u|+|v|+1)^l$ and $K_g(u+v+1)$ have $L_{q}(M\times(\tau,\tau+3))$ bounds independent of $\tau\geq 0$.  Using Lemma $\ref{bw}$, the solution of system $(\ref{k})$ is sup norm bounded. Therefore, by the comparision principle \cite{RefWorks:62}, the solution of $(\ref{sy5})$ is uniformly bounded. 
$\square$\\

{\bf Proof of Corollary $\ref{adventure23}$:} For simplicity, take $\sigma=1$ in $(V_{i,j}1)$. Let $\tau\geq 0$, and consider the system \begin{align}\label{aj5} \varphi_t&=- d_j\Delta \varphi
&( x,t)\in \Omega\times (\tau, \tau+1)\nonumber
\\ d_j\frac{\partial \varphi}{\partial \eta}&= 1 &(x,t)\in M\times (\tau, \tau+1)\\
\varphi&= \varphi_{\tau+1} &x\in\Omega ,\quad t=\tau+1\nonumber
\end{align}
where $d_j>0$, and $\varphi_{\tau+1}\in C^{2+\varUpsilon}(\overline\Omega)$ for some $\varUpsilon>0$, is nonnegative and satisfies the compatibility condition \[ d_j\frac{\partial\varphi_{\tau+1}}{\partial\eta}=1 \quad \text{on} \ M\times \lbrace \tau+1\rbrace\] From Theorem 5.3 in chapter 4 of \cite{RefWorks:65}, $\varphi\in C^{2+\varUpsilon,1+\frac{\varUpsilon}{2}}(\overline\Omega\times[\tau, \tau+1])$ and therefore $\varphi\in C^{2+\varUpsilon,1+\frac{\varUpsilon}{2}}(M\times[\tau, \tau+1])$. Also $\frac{\partial\varphi}{\partial\eta}\geq 0$. Therefore the maximum principle implies $\varphi\geq 0$. Now having enough regularity for $\varphi$ on $M\times[\tau,\tau+1]$, consider \[\Delta_M \varphi=-\frac{1}{\sqrt {det\  g}}\partial_j (g^{ij}\sqrt{det\ g}\ \partial_i\varphi)\] where $g$ is the metric on $M$ and $g^{i,j}$ is $i$th row and $j$th column entry of the inverse of the matrix associated with the metric $g$. Further, let $\tilde\vartheta=-\varphi_t-\tilde d_i\Delta_M\varphi$. 
Then,
\begin{align*}
&\int_\tau^{\tau+1}\int_{M} v_i\tilde\vartheta  =\int_\tau^{\tau+1}\int_{\Omega} u_j(-\varphi_t-d_j\Delta\varphi) +\int_\tau^{\tau+1}\int_{M} v_i(-\varphi_t-\tilde d_j\Delta_M \varphi) \\
& =\int_\tau^{\tau+1}\int_{\Omega} \varphi ({u_j}_t-d_j\Delta u_j) +\int_\tau^{\tau+1}\int_{M} \varphi({v_i}_t-\tilde d_i\Delta_M v_i) -d_j\int_\tau^{\tau+1}\int_{M} u_j\frac{\partial\varphi}{\partial\eta}+d_j\int_\tau^{\tau+1}\int_{M} \frac{\partial u_j}{\partial\eta}\varphi\\ &\quad+\int_{\Omega} u_j(x,\tau)\varphi(x,\tau)+\int_{M} v_i(\zeta,\tau)\varphi(x,\tau)-\int_{\Omega} u_j(x,\tau+1)\varphi(\cdot, \tau+1)-\int_{M} v_i(\zeta,\tau+1)\varphi (\cdot, \tau+1)
\end{align*}
Using $d_j\frac{\partial\varphi}{\partial\eta}=1$ and $(V_{i,j}1)$  
\begin{align}\label{bu}
\int_\tau^{\tau+1}\int_{M} u_j&\leq \int_{\Omega} u_j(x,\tau)\varphi(x,\tau)+\int_{M} v_i(\zeta,\tau)\varphi(x,\tau)-\int_\tau^{\tau+1}\int_{M} v_i\tilde\vartheta  
\end{align}
Now, integrating the $u_j$ equation over $\Omega$ and the $v_i$ equation over $M$, 
\begin{align*}
\frac{d}{dt}\left(\int_{\Omega} u_j+ \int_{M} v_i\right)&=d_j\int_{\Omega} \Delta u_j +\int_{\Omega}H_j(u) +\tilde d_i\int_{M} \Delta v_i + \int_{M} F_j(u,v)\nonumber \\
&\leq \int_{M} (F_j(u,v)+G_i(u,v)) \nonumber\\
&\leq 0
\end{align*}
From Gronwall's inequality, for all $t\geq 0$\begin{align}\label{intau}
\int_{\Omega} u_j (x,t)+ \int_{M} v_i(\zeta, t)  &\leq  \int_{\Omega}u_j(x,0)+\int_{M}v_i(\zeta, 0)
\end{align}
So, from $(\ref{intau})$ and $(\ref{bu})$, we get
\begin{align*}
 \int_{\tau}^{\tau+1}\int_{M} u_j(\zeta,\tau)&\leq \Vert u_j(\cdot,\tau)\Vert_{1,\Omega}\cdot\Vert\varphi(\cdot,\tau)\Vert_{\infty,\Omega}+\Vert v_i(\cdot,\tau)\Vert_{1,M}\Vert\tilde\vartheta\Vert_{\infty,M\times(\tau, \tau+1)}\nonumber\\&\nonumber\quad+\Vert v_i(\cdot,\tau)\Vert_{1,M}\cdot \Vert\varphi(\cdot,\tau)\Vert_{\infty, M}\nonumber\\&\leq K
\end{align*}
for all $\tau\geq 0$. This estimate and  $(\ref{intau})$ give the $L_1$ bounds needed in Theorem $\ref{globalu}$. Therefore, the solution to $(\ref{sy5})$ is uniformly bounded. $\square$

\section{Examples}
In this section we give some examples to support our theory.\\
\begin{example}
We show that following version of the Brusselator has a uniformly bounded solution.
\begin{align}\label{brus}
u_t &= \Delta u, &  x\in\Omega,t>0 \nonumber\\
v_t &= d\Delta_M v+B-(A+1)v+v^2u, & x\in\partial\Omega,t>0 \nonumber\\
\frac{\partial u}{\partial\eta} &=Av-v^2u, & x\in\partial\Omega,t>0 \\
u&=u_0 & x\in\Omega\nonumber\\
v&=v_0 & x\in\partial\Omega \nonumber
\end{align}
Here $ A, B, d>0$, the initial data is smooth and non negative, and $u_0$ and $v_0$ satisfy the compatibility condition. Note that $f+g \nleq 0$, so, we cannot directly apply Corollary $\ref{adventure23}$. Comparing $(\ref{brus})$ with $(\ref{sy5})$, we have\[ F(u,v)= B-(A+1)v+v^2u, \quad G(u,v)=Av-v^2u \quad \text{and}\quad H(u)=0\] It is easy see that $F, G$ and $H$ are quasipositive, $F+G\leq B$ and $G\leq A(u+v+1)$. So, the hypothesis of Theorem $\ref{great}$ are satisfied. As a result, we have global existence of solutions to $(\ref{brus})$. Note that the solution is bounded for $0\leq t<1$. Also, applying comparison principle to the $v$ equation, implies $v(x,t)\geq y(t)$, where $y$ solves \[y'(t)=B-(A+1)y, \quad y(0)=0\]
After simple calculation $y(t)=\frac{B}{A+1}(1-e^{-(A+1)t})$, therefore \[v(x,t)\geq K_1=\frac{B}{A+1}(1-e^{-(A+1)}) \quad\text{for all}\  t\geq 1\]
We get an $L_2(\Omega)$ estimate on $u(\cdot,t)$, independent
of $t$, as follows. Multiply the $u$ equation by $u$, and integrate
over $\Omega$. This gives 
\[
\frac{d}{dt}\int_{\Omega}u^{2}dx+\int_{\Omega}|\nabla u|^{2}dx=\int_{\partial\Omega}(2Au\,v-2u^{2}v^{2})d\sigma
\]
Using the lower bound on $v$, we have 
\[\frac{d}{dt}\int_{\Omega}u^{2}dx+\int_{\Omega}|\nabla u|^{2}dx+K_{1}^{2}\int_{\partial\Omega}u^{2}d\sigma\le\int_{\partial\Omega}(2Au\,v-u^{2}v^{2})d\sigma\]
for all $t\ge1$. From the compact embeddings of $H^{1}(\Omega)$
into each of $L_2(\Omega)$ and $L_2(\partial\Omega)$, there
is a constant $\delta>0$, independent of $u$, such that 
\[\delta\int_{\Omega}u^{2}dx\le\int_{\Omega}|\nabla u|^{2}dx+K_{1}^{2}\int_{\partial\Omega}u^{2}d\sigma\]
Furthermore, since $uv\ge0$, there is a constant $M>0$ so that $2Auv-(uv)^{2}\le\frac{M}{|\partial\Omega|}$.
As a result, we have 
\[\frac{d}{dt}\int_{\Omega}u^{2}dx+\delta\int_{\Omega}u^{2}dx\le M\]
 for all $t\ge1$. Therefore, there exists $K_{2}>0$ (depending on
$M$, $\delta$ and $u_{0}$) such that 
\[\int_{\Omega}u(x,t)^{2}dx\le K_{2}\]
for all $t\ge0$. Also, this certainly implies there exists $K_{3}>0$
so that 
\[\int_{\Omega}u(x,t)dx\le K_{3}\]
for all $t\ge0$. Now to obtain an $L_1(\partial\Omega)$ estimate on $v(\cdot,t)$
independent of $t\ge0$. To this end, integrate the $u$ equation
over $\Omega$, and the $v$ equation over $\partial\Omega$, and
sum the equations to obtain 
\[
\frac{d}{dt}\left(\int_{\Omega}udx+\int_{\partial\Omega}vd\sigma\right)\le\int_{\partial\Omega}(B-v)d\sigma
\]
As a result, 
\[
\frac{d}{dt}\left(\int_{\Omega}udx+\int_{\partial\Omega}vd\sigma\right)\le K_{3}+B|\partial\Omega|-\left(\int_{\Omega}udx+\int_{\partial\Omega}vd\sigma\right)
\]
So, once again, a comparison principle can be used to find a bound
for $\int_{\Omega}udx+\int_{\partial\Omega}vd\sigma$, independent
of $t\ge0$. Consequently, from the $L_1(\Omega)$ bound on $u$
above, there is a constant $K_{4}>0$ so that 
\[
\int_{\partial\Omega}v(x,t)d\sigma\le K_{4}
\]
for all $t\ge0$. So, at this point, if we set $K_{5}=\max\{K_{3},K_{4}\}$,
we know 
\[
\|u(\cdot,t)\|_{1,\Omega},\|v(\cdot,t)\|_{1,\partial\Omega}\le K_{5}
\]
for all $t\ge0$. Finally, to get an estimate in $L_1(\partial\Omega\times(\tau,\tau+1))$
for $u$, independent of $\tau\ge0$. To this end, let $\tau\ge0$. Integrating
the equation for $u$ over $\Omega\times(\tau,\tau+1)$ gives 
\[
\int_{\Omega}u(x,\tau+1)dx-\int_{\Omega}u(x,\tau)dx\le A\int_{\tau}^{\tau+1}\int_{\partial\Omega}v(x,t)d\sigma dt-K_{1}^{2}\int_{\tau}^{\tau+1}\int_{\partial\Omega}u(x,t)d\sigma dt
\]
implying, 
\[
\int_{\tau}^{\tau+1}\int_{\partial\Omega}u(x,t)d\sigma dt\le\frac{(A+1)K_{5}}{K_{1}^2}
\]
Therefore, Theorem $\ref{globalu}$ guarantees a uniform sup norm bound for $u$
and $v$.
\end{example}
\vspace{.1cm}
\begin{example2}
Consider the model considered by R\"{a}tz and R\"{o}ger \cite{RefWorks:99} for signaling networks. They formulated a mathematical model that couples reaction-diffusion in the inner volume to a reaction-diffusion system on the membrane via a flux condition. More specifically, consider the system $(\ref{sy5})$ with $k=1$ and $m=2$, where \[{G(u,v)}=-q=-b_6 \frac{|B|}{|M|}u(c_{max}-v_1-v_2)_{+}+b_{-6} v_2,\quad  {H(u)}=0\]
 \[ {F(u,v)}=\begin{pmatrix}F_1(u,v)\\F_2(u,v)\end{pmatrix}=\begin{pmatrix}k_1v_2g_0\left(1-\frac{K_5v_1g_0}{1+K_5v_1}\right)+k_2v_2\frac{K_5v_1g_0}{1+K_5v_1}-k_3\frac{v_1}{v_1+k_4}\\-k_1v_2g_0\left(1-\frac{K_5v_1g_0}{1+K_5v_1}\right)-k_2v_2\frac{K_5v_1g_0}{1+K_5v_1}+k_3\frac{v_1}{v_1+k_4} +q \end{pmatrix} \]  Also,
$ u_0=( {u_0}_j)\in W_p^{(2)}(\Omega)$, $v_0= ({v_0}_i)\in W_p^{(2)}(M)$ with $p>n$ are componentwise nonnegative, and $u_0 $  and $v_0$ satisfy the compatibility condition\[ D{\frac{ \partial {u_0}}{\partial \eta}} =G(u_0,v_0)\quad \text{on $M$}\] Here $k_\alpha, K_\alpha, g_0, c_{max}, b_{-6}$ are the same positive constants as described in \cite{RefWorks:99}. We note $F, G$ and $H$ are quasi positive functions. From Theorem $\ref{great}$, this system has a unique componentwise nonnegative global solution. In order to get uniform bounds, note that $G+F_1+F_2=0$ and $H=0$, therefore from an argument similar to Corollary $\ref{adventure23}$, we have uniform $L_1$ estimates for $u,v_1,v_2$. Consequently, Theorem $\ref{globalu}$ implies $(u,v)$ are uniformly bounded.
\end{example2}
\vspace{.1cm}
\begin{example3}
As described in \cite{RefWorks:142}, during bacterial cytokinesis, a proteinaceous contractile, called the $Z$ ring assembles in the cell middle. Positiong the $Z$ ring in the middle of the cell involves two independent processes, referred to as Min system inhibition and nucleoid occlusion \cite{RefWorks:141}. The Min subsystem consists of ATP-bound cytosolic MinD, ADP-bound cytosolic MinD, membrane-bound MinD, cytosolic MinE, and membrane bound MinD:MinE complex. Those are denoted $D_{cyt}^{ATP}$, $D_{cyt}^{ADP}$, $D_{mem}^{ATP}$, $E_{cyt}$, and $E:D_{mem}^{ATP}$, respectively. This essentially constitutes the one dimensional version of the problem. These Min proteins  react with certain reaction rates that are illustrated in Table 1. 
\begin{table}[ht]\label{table:nonlin}
\caption{Reactions and Reaction Rates} 
\centering 
\begin{tabular}{|ccc| }
\hline\hline                       
Chemicals & Reactions & Reaction Rates\\ [0.5ex] 
\hline 
& & \\            
Min D  & $D^{ADP}_{cyt}\xrightarrow{k_{1}} D_{cyt}^{ATP}$ & $ R_{exc}=k_{1}[D_{cyt}^{ADP}]$ \\[1ex]
Min D & $D_{cyt}^{ATP}\xrightarrow{k_{2}} D_{mem}^{ATP}$ & $R_{Dcyt}=k_{2}[D_{cyt}^{ATP}]$\\ [1ex]
&$D_{cyt}^{ATP}\xrightarrow{k_{3}[D_{mem}^{ATP}]} D_{mem}^{ATP}$ & $R_{Dmem}=k_{3}[D_{mem}^{ATP}][D_{cyt}^{ATP}]$\\ [1ex]  
Min E  &$E_{cyt}+D_{mem}^{ATP}\xrightarrow{k_{4}} E:D_{mem}^{ATP}$ &$ R_{Ecyt}=k_{4}[E_{cyt}] [D_{mem}^{ATP}]$\\ [1ex] 
& $E_{cyt}+D_{mem}^{ATP}\xrightarrow{k_{5}[E:D_{mem}^{ATP}]^2} E:D_{mem}^{ATP}$ & $R_{Emem}=k_{5}[D_{mem}^{ATP}][E_{cyt}][E:D_{mem}^{ATP}]^2$\\[1ex]
Min E & $ E:D_{mem}^{ATP} \xrightarrow{k_{6}} E+D_{cyt}^{ADP}$& $R_{exp}=k_{6}[E:D_{mem}^{ATP}]$\\[1ex]
\hline 
\end{tabular}
\end{table} These reactions lead to five component model with $(u,v)=(u_1,u_2,u_3,v_1,v_2)$, where
\[u=\begin{pmatrix}u_1\\u_2\\u_3\end{pmatrix}=\begin{pmatrix} \left[D_{cyt}^{ATP}\right]\\ \left[D_{cyt}^{ADP}\right]\\ \left[E_{cyt}\right] \end{pmatrix}, {v}=\begin{pmatrix}v_1\\v_2\end{pmatrix}=\begin{pmatrix}\left[D_{mem}^{ATP}\right]\\ \left[E:D_{mem}^{ATP}\right]\end{pmatrix} \]
\[\tilde D=\begin{pmatrix} \sigma_{Dmem} & 0 \\ 0 &  \sigma_{E:Dmem} \end{pmatrix},\quad  D=\begin{pmatrix} \sigma_{Dcyt} & 0 & 0\\ 0 &  \sigma_{ADyct} & 0\\0 & 0 & \sigma_{Ecyt} \end{pmatrix} \]\\
 \[{G(u,v)}=\begin{pmatrix}G_1(u,v)\\G_2(u,v)\\G_3(u,v)\end{pmatrix}=\begin{pmatrix}- R_{Dcyt}-R_{Dmem}\\ R_{exp}\\ R_{exp}-R_{Ecyt}-R_{Emem}\end{pmatrix}=\begin{pmatrix}- k_2 u_1-k_3v_1u_1\\ k_6v_2\\ k_6v_2-k_4u_3v_1-k_5v_1u_3{v_2}^2\end{pmatrix}, \]
 \[ {F(u,v)}=\begin{pmatrix}F_1(u,v)\\F_2(u,v)\end{pmatrix}=\begin{pmatrix} R_{Dcyt}+R_{Dmem}-R_{Ecyt}-R_{Emem}\\ -R_{exp}+R_{Ecyt}+R_{Emem}\end{pmatrix}= \begin{pmatrix}k_2u_1+k_3v_1u_1-k_4u_3v_1-k_5v_1u_3{v_2}^2\\ -k_6v_2+k_4u_3v_1+k_5v_1u_3{v_2}^2\end{pmatrix}, \] \[ {H(u)}=\begin{pmatrix}H_1(u)\\H_2(u)\\H_3(u)\end{pmatrix}=\begin{pmatrix} R_{exc}\\ -R_{exc}\\ 0\end{pmatrix}=\begin{pmatrix} k_1u_2\\ -k_1u_2\\ 0\end{pmatrix}, \] and
$ u_0=( {u_0}_j)\in W_p^{2}(\Omega)$, $v_0= ({v_0}_i)\in W_p^{2}(M)$ are componentwise nonnegative functions with $p>n$.  Also, $u_0 $  and $v_0$ satisfy the compatibility condition\[ D{\frac{ \partial {u_0}}{\partial \eta}} =G(u_0,v_0)\quad \text{on $M$}\] Here expressions of the form $k_{\alpha}$ and $\sigma_{\beta}$ are positive constants. Note $F, G$ and $H$ are quasi positive functions. In the multidimensional setting, the concentration densities satisfy the reaction-diffusion system given by $(\ref{sy5})$.
From Theorem $\ref{great}$, this system has a unique componentwise nonnegative global solution. In this example, if we take two specific components at a time, we are able to obtain uniform bounds for each of the components. For that purpose we apply our results to $(u_3,v_2)$, $u_2$ and $(u_1,u_2,v_1)$. 

Consider $(u_3,v_2)$, it is easy to see that for $j=3$ and $i=2$, the hypothesis of Corollary $\ref {adventure23}$ is satisfied, since $G_3+F_2\leq 0$ and $H_3=0$. As a result, $u_3$, $v_2$ are uniformly bounded. Integrating equation $u_2$ over $\Omega\times(\tau,\tau+1)$ and using uniform estimates of $v_2$, we get uniform $L_1$ estimates for $u_2$ and from Theorem $\ref{globalu}$, $u_2$ is uniformly bounded on $\Omega\times(0,{\infty})$. Finally, consider $(u_1, u_2,v_1)$. Integrating $u_1, u_2$ equation over $\Omega$ and adding the sum of them with $v_1$ equation over $\partial\Omega$, and further using uniform estimates of $v_2$, we get uniform $L_1$ estimates for $u_1, v_1$. Therefore from Theorem $\ref{globalu}$, $u_1$ and $v_1$ are also uniformly bounded in time.\\

\end{example3}
\vspace{.1cm}

\end{document}